\theoremstyle{plain}
\newtheorem{theorem}{Theorem}
\newtheorem{lemma}[theorem]{Lemma}
\theoremstyle{definition}
\newtheorem{definition}[theorem]{Definition}
\theoremstyle{remark}
\newtheorem{remark}[theorem]{Remark}
\newcommand{\R}{\mathbb{R}}
\newcommand{\Rplus}{{[0,\infty)}}
\newcommand{\Rminus}{{(-\infty,0]}}
\newcommand{\der}{\mathrm{d}}
\newcommand{\dd}{\,\der}
\newcommand{\eps}{\varepsilon}
\renewcommand{\phi}{\varphi}
\newcommand{\abs}[1]{\left| #1 \right|}
\newcommand{\order}{o}
\newcommand{\Order}{\mathcal{O}}
\newcommand{\rev}[1]{\leftarrowaccent{#1}}
\newcommand{\m}[1]{\mathbf{#1}}
\DeclareMathOperator{\id}{id}
\newcommand{\dummy}{{\,\cdot\,}}
\newcommand{\G}{\mathscr{G}}
\newcommand{\HOX}[1]{}
\title[Reconstruction along a geodesic from sphere data]{Reconstruction along a geodesic from sphere data in Finsler geometry and anisotropic elasticity}
\author{Maarten V. de Hoop}
\thanks{Simons Chair in Computational and Applied Mathematics and Earth Science, Rice University, Houston TX, USA. \texttt{mdehoop@rice.edu}}
\author{Joonas Ilmavirta}
\thanks{Department of Mathematics and Statistics, University of Jyv\"askyl\"a, Finland. \texttt{joonas.ilmavirta@jyu.fi}}
\author{Matti Lassas}
\thanks{Department of Mathematics and Statistics, University of Helsinki, Finland. \texttt{matti.lassas@helsinki.fi}}
\date{\today}
\keywords{Inverse problems, Finsler geometry, partial differential equations, seismology}
\subjclass{53C60, 86A22, 53Z05}
\begin{document}

\begin{abstract}
Dix formulated the inverse problem of recovering an elastic body from the measurements of wave fronts of point sources. We geometrize this problem in the context of seismology, leading to the geometrical inverse problem of recovering a Finsler manifold from certain sphere data in a given open subset of the manifold. We solve this problem locally along any geodesic through the measurement set.
\end{abstract}

\maketitle


\section{Introduction}
\label{sec:intro}

Given small parts of metric spheres in a small open subset of a Finsler manifold, to what extent is the manifold uniquely determined?
We solve it in a local fashion along a reference geodesic.

This is related to the following problem arising from seismology:
There are point sources with known onset times within a manifold and we can detect the wave fronts (corresponding to metric spheres) in a small open set.
Without any assumptions on isotropy, can we use this sphere data to find the material parameters (the stiffness tensor field) within the manifold if densely many point sources are available?

The propagation of the fastest wave fronts or singularities to the elastic wave equation follows the geodesic flow of a Finsler geometry.
This Finsler geometry is not Riemannian unless we make strong symmetry assumptions on the stiffness tensor.
A geometric version of the physical problem was solved in~\cite{Riemann-Dix} in Riemannian geometry, but it is only applicable in an isotropic or elliptically anisotropic situation.
Without assuming any kind of isotropy, we are forced to study a similar geometric problem on Finsler manifolds.

The earth or a subset thereof is modelled as a smooth Finsler manifold $(M,F)$ and the measurement device is an open subset $U\subset M$.
For a point $x\in M$ and a radius $r>0$ a sphere on the manifold is the image of the sphere on~$T_xM$ under the exponential map.
The sphere may intersect itself and fail to be smooth in complicated ways, as we make no assumptions on conjugate points.
These spheres represent the wave fronts of the fastest elastic waves under a mild assumption; see section~\ref{sec:geo-intro}.
The data consists of smooth subsets of these spheres without any knowledge of whether two sphere segments originate from the same source.

The only thing known about the sphere segments is their radius which corresponds to the travel time.
This travel time and the origin time can be assumed to be known if the point sources are artificially produced by sending waves from the set~$U$ and measuring the arrivals of scattered wave fronts from point scatterers.

We prove in theorem~\ref{thm:RU} that this data determines the curvature operator and the Jacobi fields backwards along any geodesic hitting~$U$.
The data consists of spheres, so we may use the associated normal coordinates to describe~$M$ outside~$U$.
In these coordinates the local Riemannian metric (also known as the fundamental tensor) is determined by the sphere data as we show in theorem~\ref{thm:Riemann}.

\subsection{Earlier results}

It was shown in~\cite{Riemann-Dix} that in Riemannian geometry the sphere data (similar to what was considered originally by Dix~\cite{dix}, a prominent seismologist of the 1950s) determines the universal cover of the manifold.
Our main goal and contribution is to make this result more applicable by pushing it into the realm of Finsler geometry.
A crucial component in the proof of both the older Riemannian and the present Finslerian result is setting up a closed ODE system along the geodesic starting normal to the spheres in the data.
We do this in lemma~\ref{lma:JSKR-unique}.

The ODE problems are equivalent in the two geometries; the local Riemannian metric along the reference geodesic makes the Finslerian quantities behave much like their Riemannian counterparts.
The ODE system in~\cite{Riemann-Dix} was set up for the first few derivatives of the inverse shape operator $K(r,t)$.
The version of our lemma~\ref{lma:JSKR-unique} is set up with Jacobi fields and their covariant derivatives instead.
The Jacobi fields are an item of more direct interest than the shape operator and all curvature quantities can be computed directly from them.
The Jacobi fields written in a parallel orthonormal frame encode the components of the local Riemannian metric or the fundamental tensor as we shall see in the proof of theorem~\ref{thm:Riemann}.

Our method of proof is constructive and suggests a computational algorithm that is expected to behave more stably than that extracted from~\cite{Riemann-Dix}.
Unlike this earlier Riemannian version, our ODE system does not suffer from conjugate points at all, and the system can be solved in one go along all of the relevant geodesic.
In this sense we present an improvement and simplification --- in addition to generalization --- of the results of~\cite{Riemann-Dix}.

Inverse problems in elasticity have recently been studied in the framework of Finsler geometry.
See section~\ref{sec:geo-intro} for how Finsler geometry corresponds to elasticity and seismology.
The determination of a Finsler manifold with suitable properties has been shown from boundary distance data~\cite{dHILS:BDF} and broken scattering data~\cite{dHILS:BSR}.

The geometric point of view and Finsler geometry corresponding to the propagation of elastic waves is only beginning to be studied.
Nevertheless, inverse problems for anisotropic elasticity have received substantial attention using different approaches in the past.
Of these we mention problems of identifying inclusions~\cite{INT} or cracks~\cite{II}, tomography for residual stresses~\cite{SW}, and various problems where the stiffness tensor field itself is to be reconstructed~\cite{dHNZ,dHUV,NTU,NU}.
A different geometric approach, using tools of metric rather than smooth geometry, was recently used to approximately reconstruct a manifold from a finite number of seismic sources at unknown times~\cite{dHILS:multisource}.

We wish to highlight two aspects of the importance of studying the unique determination of a Finsler manifold from geophysically relevant data.
First, it is known that in general a Finsler manifold, unlike a Riemannian one, is never uniquely determined by boundary distance data up to isometry~\cite{Iv13}.
It is therefore far from obvious which results on inverse problems on Riemannian manifolds remain true in the Finslerian realm and what additional assumptions on the Finsler manifold could make them remain true.
Second, elastic materials are well described by Finsler geometry and some important features of the theory are lost by restricting to Riemannian descriptions.
A very recent result~\cite{dHILVA:slowness} shows that for a generic stiffness tensor the whole slowness surface and the tensor itself are uniquely determined by any small open subset of the slowness surface for a single polarization.
This kind of data is geometrically accessible; cf. e.g.~~\cite{dHILS:BSR}.
This surprising uniqueness result is false if the slowness surface for any polarization is an ellipsoid, so this strong conclusion relies on the elastic material being described by non-Riemannian Finsler geometry.
For the ultimate goal of indirect measurements of the Earth and other elastic bodies, Finsler geometry provides stronger uniqueness results than Riemannian.

\subsection{The geometric setting}
\label{sec:geom-setup}

This subsection sets up the geometric preliminaries needed to present our results.

\subsubsection{Finsler geometry}
\label{sec:finsler}

A Finsler manifold is a differentiable manifold~$M$ with a function $F_x\colon T_xM\to\Rplus$ for each~$x$.
This function need not be symmetric, but otherwise it satisfies the assumptions of a norm.
Combining the functions on separate fibers gives rise to the Finsler function $F\colon TM\to\Rplus$, which is continuous on~$TM$ and smooth enough on $TM\setminus0$.
Any differentiability assumptions are only assumed to hold on the punctured tangent bundle $TM\setminus0$.

The Finsler function~$F$ could be additionally assumed to be symmetric on each tangent space (reversible), but we do not need to make this assumption.
Finsler functions arising from elasticity are reversible.
It follows from reversibility that the distance function is symmetric and the reverse of a geodesic is a geodesic.
We do not assume reversibility, so these properties do not generally hold.

For any $x\in M$ and $v\in T_xM\setminus0$ we define in local coordinates the matrix
\begin{equation}
\label{eq:g(x,v)-def}
g_{ij}(x,v)
=
\frac12\left.\partial_{v^i}\partial_{v^j} F_x^2\right|_{v}.
\end{equation}
Observe that if~$F_x$ is given by the square root of a positive definite quadratic form (a Riemannian metric on~$T_xM$), then~$g(x,v)$ is independent of~$v$.
In fact,~$g(x,v)$ is independent of~$v$ if and only if the metric is Riemannian.

We call this~$g(x,v)$ or~$g_v(x)$ the \emph{local Riemannian metric}, and it is also known as the fundamental tensor.
Once we have a preferred direction $v\in T_xM\setminus0$, we have a metric tensor~$g(x,v)$ which gives a natural way to linearly identify~$T_xM$ and~$T_x^*M$ and give an inner product on both spaces.
In this paper we work along geodesics, and the preferred direction is the tangent of the geodesic.
This direction dependence is what sets Finsler geometry apart from Riemannian geometry.

The length of any curve is defined using the Riemannian metric associated with its tangent direction.
This gives rise to a distance function $d_F\colon M\times M\to\Rplus$, and length is minimized locally by geodesics like in Riemannian geometry.
If~$F$ is not reversible, then the length $d_F(x,y)$ of the shortest curve from $x\in M$ to $y\in M$ is in general different from $d_F(y,x)$.

We say that the Finsler function $F\colon TM\to\Rplus$ is \emph{fiberwise analytic} if for each $x\in M$ the restriction $F_x\colon T_xM\to\Rplus$ is real-analytic on $T_xM\setminus0$.
We do not need analytic structure on the manifold for this definition; each fiber is a finite dimensional vector space and linear isomorphisms preserve analyticity on such spaces.

Many concepts familiar from Riemannian geometry have their Finsler counterparts.
We use, in particular, geodesics, Jacobi fields, shape operators, surface normal coordinates, and exponential maps.
For more precise definitions and further insights into Finsler geometry, we refer to~\cite{Shen,BCS:finsler-book}.

An important difference between Finsler and Riemannian geometry, besides the directional dependence of the local Riemannian metric mentioned above, is that some symmetry and regularity is lost.
The Finsler distance function is not symmetric, and the exponential map typically fails to be smooth at the origin of a tangent space.

\subsubsection{Operators along a geodesic}
\label{sec:operator-intro}

Let $(M,F)$ be a Finsler manifold and $\gamma\colon I\to M$ a unit speed geodesic defined on an open interval $I\subset\R$.
We assume for convenience that $0\in I$.

The directional curvature operator~$R_{\dot\gamma(t)}$ along~$\gamma$ is a linear operator on~$T_{\gamma(t)}M$.
It is also known as the Riemann curvature in the direction~$\dot\gamma$~\cite{Shen} and we will denote it later by~$R(t)$ for simplicity.
In Riemannian geometry it can be defined by
\begin{equation}
R_{\dot\gamma}(V)=R(V,\dot\gamma)\dot\gamma
,
\end{equation}
where~$R$ is the Riemann curvature tensor.
In Finsler geometry, the matrix elements of~$R_v$ for $v\in T_xM\setminus0$ are given by~\cite[(6.4)]{Shen}
\begin{equation}
\label{eq:S6.4}
(R_v(x,v))^i_k
=
2\frac{\partial \G^i}{\partial x^k}
-v^j\frac{\partial^2\G^i}{\partial x^j\partial v^k}
+2\G^j\frac{\partial \G^i}{\partial v^j\partial v^k}
-\frac{\partial \G^i}{\partial v^j}\frac{\partial \G^i}{\partial v^k}
,
\end{equation}
where (cf.~\cite[(5.2) and (5.7)]{Shen})
\begin{equation}
\begin{split}
\G^i(x,v)
&=
\frac14g^{il}(x,v)\left(
\frac{\partial^2F^2}{\partial x^k\partial v^l}(x,v)v^k
-
\frac{\partial F^2}{\partial x^l}(x,v)
\right)
\\&=
\frac14g^{il}(x,v)\left(
2\frac{\partial g_{jl}}{\partial x^k}(x,v)
-
\frac{\partial g_{jk}}{\partial x^l}(x,v)
\right)v^jv^k
\end{split}
\end{equation}
are the geodesic coefficients.
A Finsler metric is called Berwald if the geodesic coefficients are given by $\G^i(v)=\frac12\Gamma^i_{\phantom{i}jk}(x)v^jv^k$ for some local functions $\Gamma^i_{\phantom{i}jk}(x)$.
Riemannian metrics are Berwald and the local functions in question are the Christoffel symbols.

For any $t\in I$, we denote by
\begin{equation}
N_t
=
\{v\in T_{\gamma(t)}M;g_{\dot\gamma}(\dot\gamma,v)=0\}
\end{equation}
the set of vectors normal to~$\dot\gamma(t)$ in the sense of the inner product~$g_{\dot\gamma(t)}$ on~$T_{\gamma(t)}M$.

Since~$R_{\dot\gamma(t)}$ is self-adjoint with respect to~$g_{\dot\gamma}$ and $R_{\dot\gamma(t)}\dot\gamma(t)=0$ (see e.g. \cite[Section 6.1]{CS:finsler-book}), we have $R_{\dot\gamma(t)}(N_t)\subset N_t$.
Therefore we may consider the curvature operator as a map
\begin{equation}
R(t)
\colon
N_t
\to
N_t
\end{equation}
for all $t\in I$.

A Jacobi field along~$\gamma$ is a vector field~$J$ along~$\gamma$ satisfying the Jacobi equation~\cite[Lemma~6.1.1]{Shen}
\begin{equation}
D_t^2J(t)+R(t)J(t)=0,
\end{equation}
where~$D_t$ is the covariant derivative along~$\gamma$.
As in Riemannian geometry, Jacobi fields correspond to geodesic variations.
The basic properties of Jacobi fields are the same as in Riemannian geometry, including the way they split in parallel and normal components.
We will only study normal Jacobi fields, and they can be characterized as those Jacobi fields~$J$ for which $J(t)\in N_t$ for all $t\in I$.
Then also $D_tJ(t)\in N_t$ for all $t\in I$.

We can thus define the solution operator to the Jacobi equation as the map
\begin{equation}
U(t,s)\colon N_t^2\to N_s^2
\end{equation}
for which $U(t,s)(V,W)=(J(t),D_tJ(t))$ for the Jacobi field~$J$ with initial conditions $J(s)=V$ and $D_tJ(t)|_{t=s}=W$.
Clearly $U(t,t)$ is the identity and $U(t,s)^{-1}=U(s,t)$.

All covariant derivatives and parallel transport are taken with respect to the covariant derivative corresponding to the Chern connection as in~\cite[Section~5]{Shen}.

\subsubsection{Parallel frames}
\label{sec:frame-intro}

Consider again the geodesic $\gamma\colon I\to M$ and recall that $0\in I$.
Take any orthonormal basis $w_1,\dots,w_{n-1},w_n=\dot\gamma(0)$ of~$T_{\gamma(0)}M$ with respect to~$g_{\dot\gamma(0)}$.
Let~$f_k(t)$ be the parallel translation of~$w_k$ along~$\gamma$.
The dual frame has the basis covectors $f^k(t)\in T_{\gamma(t)}^*M$ satisfying $f^i(t)[f_j(t)]=\delta^i_j$ for all $t\in I$.
See e.g. \cite[Chapter 4]{CS:finsler-book} for more details on parallel transport in Finsler geometry.

These frames induce a natural bijection $N_t\to N_0$ given by
$\sum_{k=1}^{n-1}f_k(0)f^k(t)$.
This is an isometry for the inner products~$g_{\dot\gamma}$.
Using this identification, we get the curvature operator
\begin{equation}
\label{eq:hat-R}
\hat R(t)
\colon
N_0\to N_0
\end{equation}
and the Jacobi field evolution operator
\begin{equation}
\label{eq:hat-U}
\hat U(t,s)
\colon
N_0^2\to N_0^2.
\end{equation}
These operators describe the curvature and the evolution of Jacobi field along all of the geodesic in terms of the space~$N_0$ at $t=0$.

\subsection{The inverse problem}

With the geometric prerequisites in place, we can proceed to describe the inverse problem and its solution in detail.

\subsubsection{Sphere data}

Consider a Finsler manifold~$M$ without boundary, and an open set $U\subset M$.
We measure data on the known set~$U$ and aim to determine the unknown remainder of the manifold, $M\setminus U$.
The data consists of all smooth subsets of spheres intersecting~$U$ together with their radii.
We next describe the data in detail.
We assume that we know the geometry fully in~$U$.

Take any point $x\in M$.
A sphere on~$T_xM$ is a level set of of the Finsler function~$F_x$ on this fiber, and the radius is the value of~$F_x$.
A generalized sphere on~$M$ is the image of a sphere on~$T_xM$ under the exponential map $\exp_x\colon T_xM\to M$ based at~$x$.

A forward sphere of radius $r>0$ centered at~$x$ is the set
\begin{equation}
\{y\in M;d_F(x,y)=r\}.
\end{equation}
Since the distance function~$d_F$ may not be symmetric, it is important that the distance is measured from~$x$ to~$y$, not vice versa.
For sufficiently small radii the generalized sphere is in fact a forward sphere because short geodesics minimize distances.

A generalized sphere might not be a smooth hypersurface due to conjugate points.
The differential of the exponential map~$\exp_x$ fails to be bijective precisely at the conjugate locus.
The differential may not exist at zero --- unlike in Riemannian geometry --- but if it does, it is bijective.

Let $C(x)\subset T_xM$ denote the conjugate locus of~$x$.
It consists of all points conjugate to~$x$, not only the first ones along each geodesic.

\begin{definition}
\label{def:vss-TM}
A \emph{visible smooth sphere on~$T_xM$} of radius $r>0$ is an open (in the relative topology of~$T_xM$) connected subset of
\begin{equation}
S^{n-1}_{T_xM}(0,r)\setminus
\left[\exp_x^{-1}(M\setminus U)\cup C(x)\right].
\end{equation}
Here $S^{n-1}_{T_xM}(0,r)=\{v\in T_xM;F_x(v)=r\}$ is the Finsler sphere of radius~$r$.
\end{definition}

Removal of the complement of~$U$ corresponds to ``visibility'' and that of the conjugate locus to ``smoothness''.
This smoothness does not prohibit self-intersections.

\begin{definition}
\label{def:vss-M}
A \emph{visible smooth sphere on~$M$} of radius $r>0$ centered at $x\in M$ is the oriented surface~$\exp_x(S)$, where~$S$ is a visible smooth sphere on~$T_xM$ and so small that~$\exp_x|_{S}$ is injective.
This hypersurface inherits an orientation from the Finsler sphere on~$T_xM$, telling which way is outward (in the direction of the velocity of the geodesic from~$x$ to the point in question).
\end{definition}

We will work on visible smooth spheres locally, so the smallness assumption on~$S$ will be left implicit from now on.
This assumption guarantees that~$\exp_x(S)$ is indeed a smooth surface and we need not worry about cut points.

By~$\nu$ we always denote the outward unit normal.
If $x\in M$ is a point on a visible smooth sphere of radius $r>0$ and~$\nu(x)$ is the normal vector to the sphere, then the center of the sphere is~$\gamma_{x,\nu(x)}(-r)$.
Because the metric is not assumed reversible, it may be that $\gamma_{x,\nu(x)}(-r)\neq\gamma_{x,-\nu(x)}(r)$ since reverse geodesics are not necessarily geodesics.

\begin{definition}
\label{def:SD}
The \emph{sphere data in the set~$U$} is the set
\begin{equation}
\begin{split}
&SD(U,M,F)
=
\{(\Sigma,t);t>0\text{ and}
\\&\qquad
\text{$\Sigma\subset U$ is a visible smooth sphere on $M$ of radius $t$}\}
\end{split}
\end{equation}
of pairs of oriented hypersurfaces and their radii.
Knowledge of the center points of the spheres is not included in the data.
\end{definition}

The centers of the spheres can be considered virtual point sources.
Our data is given as sphere data, and the aim is to reconstruct a Finsler manifold.
The structure is described by a family of coordinate local systems and the Finsler function on them.
It is important that the coordinate systems are based on the sphere data.

\begin{definition}
\label{def:SD-pb}
Let $(M_i,F_i)$, $i=1,2$, be two Finsler manifolds, and $U_i\subset M_i$ open subsets.
If $\psi\colon U_1\to U_2$ is a diffeomorphism, the \emph{pullback of the sphere data} is
\begin{equation}
\psi^*SD(U_2,M_2,F_2)
=
\{(\psi^{-1}(\Sigma),t);(\Sigma,t)\in SD(U_2,M_2,F_2)\}.
\end{equation}
The two manifolds $(M_i,F_i)$ are said to have the same sphere data if there is a diffeomorphism $\psi\colon U_1\to U_2$ so that $SD(U_1,M_1,F_1)=\psi^*SD(U_2,M_2,F_2)$.
\end{definition}

We will assume that the diffeomorphism $\psi\colon U_1\to U_2$ is actually an isometry.
Physically, it makes sense to assume that the measurement area is fully known and only other regions of the manifold are unknown.

\subsubsection{Theorems}

Our main result is the next theorem.
It states that the sphere data determines the curvature operator and Jacobi fields along any geodesic through the measurement set.
The proof only makes use of sphere data for spheres with their centers on the geodesic in question.
The setting
is illustrated in figure~\ref{fig:geodesic-and-normals}.

\begin{theorem}
\label{thm:RU}
Let $(M_i,F_i)$ for $i=1,2$ be two Finsler manifolds without boundary and $U_i\subset M_i$ open subsets.
Suppose there is an isometry $\psi\colon U_1\to U_2$ so that up to identification by~$\psi$ the two manifolds have the same sphere data: $SD(U_1,M_1,F_1)=\psi^*SD(U_2,M_2,F_2)$.

Let $I\subset\R$ be an open interval containing~$0$.
Take any $(x,v)\in TU_1$ with $F_1(x,v)=1$ and a geodesic $\gamma_1\colon I\to M_1$ with the initial data $(x,v)$.
Let~$\gamma_2$ be a geodesic $I\to M_2$ with the initial data $\der\psi(x,v)$.
Identify the spaces $N_0^i=\{v\in T_{\gamma_i(0)}M_i;g_{\dot\gamma_i(0)}(\dot\gamma_i(0),v)=0\}$ for $i=1,2$ by $\der\psi\colon N_0^1\to N_0^2$.

Then for all $t,s\in I\cap\Rminus$ we have
$\hat R_1(t)=\hat R_2(t)$
and
$\hat U_1(t,s)=\hat U_2(t,s)$.
\end{theorem}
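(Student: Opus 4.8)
The strategy is to show that the sphere data, restricted to spheres whose normals along the reference geodesic point into the measurement set, pins down enough infinitesimal geometry near the geodesic to run the closed ODE system of lemma~\ref{lma:JSKR-unique} backwards. Fix $(x,v)\in TM_1$ with $F_1(x,v)=1$; by the isometry assumption the geometry on $U_1$ and all its associated data agree with that on $U_2$, so it suffices to argue that the sphere data determines the family $\{\hat R_1(t),\hat U_1(t,s)\}_{t,s\le 0}$ intrinsically. The key observation is that for each $t\in I\cap\Rminus$ the point $\gamma_1(t)$ is the center of a forward sphere of radius $|t|$ passing through $\gamma_1(0)\in U_1$ with outward normal $\dot\gamma_1(0)$, and more generally, for $t<t'\le 0$ the geodesic segment $\gamma_1|_{[t,t']}$ is a common normal geodesic to a whole smooth family of spheres centered along $\gamma_1$. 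Thus the visible smooth spheres in the data that meet $U_1$ near $\gamma_1(0)$ and are normal there to $\dot\gamma_1(0)$ give us, for each radius $r=|t|$, the hypersurface $\exp_{\gamma_1(t)}(S^{n-1}_{T_{\gamma_1(t)}M}(0,r))$ intersected with $U_1$, together with its second fundamental form at the footpoint — and this is exactly the datum from which the shape operator and Jacobi fields at the footpoint are read off.

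First I would set up, purely on the side of $(M_1,F_1)$, the dictionary between sphere data and the ODE unknowns: the inverse shape operator $K(r,t)$ of a sphere of radius $r$ observed along $\gamma_1$, the Jacobi field evolution operator, the curvature operator $\hat R_1$, and the parallel-frame identifications $N_t\to N_0$ of section~\ref{sec:frame-intro}. Lemma~\ref{lma:JSKR-unique} provides the closed first-order ODE system that these quantities satisfy along $\gamma_1$, with the curvature operator determined algebraically from the shape-operator data; crucially the excerpt emphasizes that this system does not degenerate at conjugate points and can be integrated in one sweep. The second step is to verify that the \emph{initial} data for this ODE system at $t=0$ is determined by $SD(U_1,M_1,F_1)$ together with the known geometry on $U_1$: at $t=0$ the relevant sphere footpoint lies in $U_1$, so its position, normal, second fundamental form, and the induced $g_{\dot\gamma_1(0)}$ inner product on $N_0^1$ are all directly measurable. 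The third step is transport: since the sphere data for each radius $r$ encodes the shape operator not only at one footpoint but along the portion of the normal geodesic that stays visible — i.e. for $\gamma_1(t)$ with $t$ such that the sphere of radius $r-|t|$ still meets $U_1$ — we obtain the shape-operator function $K(r,t)$ on an open set of $(r,t)$ sufficient to feed the ODE. Then by uniqueness of solutions to the ODE system of lemma~\ref{lma:JSKR-unique}, together with the identical initial data and identical inhomogeneous terms coming from the shared sphere data (via $\psi$), the solutions for $i=1$ and $i=2$ coincide on $I\cap\Rminus$, giving $\hat R_1(t)=\hat R_2(t)$ and $\hat U_1(t,s)=\hat U_2(t,s)$.

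The main obstacle I anticipate is the \emph{visibility bookkeeping}: one must show that for every $t\in I\cap\Rminus$ there is a genuine open family of visible smooth spheres whose normal geodesics pass through $\gamma_1(t)$ and whose footpoints fill an open subset of $U_1$ — in other words that the ``observable'' shape-operator data along $\gamma_1$ really is defined on an open enough set in the $(r,t)$ plane for the ODE of lemma~\ref{lma:JSKR-unique} to be driven all the way back. Because no assumptions are made on conjugate points, a single sphere may fail to be smooth at the footpoint or may self-intersect, so one cannot use one sphere globally; instead one works with the definition of visible smooth sphere (definition~\ref{def:vss-M}), which already localizes to injectivity neighborhoods, and patches. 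A secondary subtlety is naturality under $\psi$: one must check that every construction — footpoint, outward normal $\nu$, second fundamental form, the inner product $g_{\dot\gamma}$ used to define $N_t$, and the parallel-frame isometry $N_t\to N_0$ — is built from data that transforms correctly under the isometry $\psi$ and its differential $\der\psi$, so that the coincidence on $U_1\cong U_2$ propagates to the ODE coefficients. Once these two points are handled, the conclusion follows from ODE uniqueness with no further curvature computation.
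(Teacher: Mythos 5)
Your proposal follows essentially the same route as the paper's proof: use the isometry on $U$ and the sphere data to identify the shape operators $S(r,t)$ for observation points $r$ in a small interval of the geodesic inside $U$ and centers $t$ ranging over the past, then invoke lemma~\ref{lma:JSKR-unique} to propagate uniqueness of the Jacobi fields and curvature backwards, with $\hat U$ recovered from $\hat R$ via the Jacobi equation. The obstacles you flag (visibility bookkeeping near the footpoint and naturality of all constructions under $\der\psi$) are exactly the points the paper handles by restricting the observation parameter to $I_0=(-\eps,\eps)$ with $\gamma([-\eps,\eps])\subset U$ and by treating the ``negative radius'' spheres inside $U$ directly via the isometry.
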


\begin{figure}[ht]
    \centering
    \includegraphics[scale=0.35]{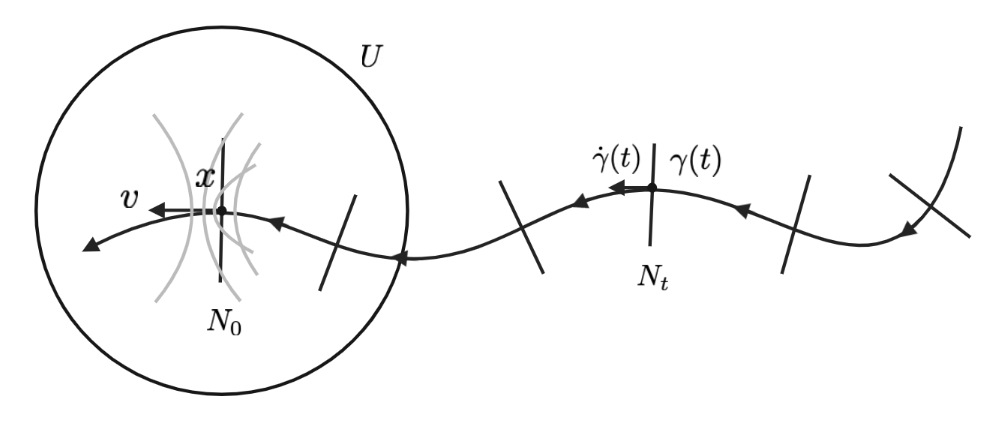}
    \caption{The setting of theorem~\ref{thm:RU}:
    We start from a point $x\in U$ and a unit vector $v\in T_xM$ at time $t=0$ and follow the geodesic backwards into the unknown $M\setminus U$.
    At all times~$t$ we have the normal plane~$N_t$ to~$\dot\gamma(t)$ as a subset of~$T_{\gamma(t)}M$.
    We can identify all these planes canonically with~$N_0$ through parallel transport.
    This identification makes the operators act on just~$N_0$ instead of a bundle of planes along~$\gamma$.
    The data we use is the visible smooth surfaces normal to the geodesic near the initial point~$x$, pictured in light gray.}
    \label{fig:geodesic-and-normals}
\end{figure}

As the map~$\psi$ was assumed to be isometric, the conclusion is trivial as long as the geodesic~$\gamma_1$ remains in~$U_1$.
The theorem states that the curvatures and Jacobi fields agree along all of the geodesic.
The only constraint is that the two geodesics might only be defined a finite amount of time into the past if the geodesics are not maximal or the manifolds are not geodesically complete.
Notice also that as~$\psi$ is an isometry $U_1\to U_2$, its differential $\der\psi\colon TU_1\to TU_2$ maps unit vectors to unit vectors.

Suppose now that the Finsler manifold is geodesically complete.
Given a visible smooth sphere $\Sigma\subset U$, we can define the surface normal exponential map $\phi\colon\Sigma\times\R\to M$ by following geodesics in the normal direction; see section~\ref{sec:snc} and especially~\eqref{eq:sne-def} for more details.
Given any coordinates $\alpha\colon\R^{n-1}\supset\Omega\to\Sigma$, there is an induced map
\begin{equation}
\label{eq:snc-def}
\begin{split}
\phi_\alpha&\colon\Omega\times\R\to M,\\
\phi_\alpha&(z,t)=\phi(\alpha(z),t).
\end{split}
\end{equation}
This map helps give local coordinates on~$M$ in terms of the sphere data.

When~$\phi_\alpha$ is a diffeomorphism from some subset $\Omega'\subset\Omega\times\R$ to the image~$\phi_\alpha(\Omega')$, it gives rise to a natural local vector field~$G$ on~$M$ by pushing forward the constant vector field $(0,1)\in\R^{n-1}\times\R$ on~$\Omega'$.
The integral curves of~$G$ are the geodesics along which~$\phi$ is defined and which come orthogonally to~$\Sigma$.

The next theorem states that the sphere data determines when~$\phi_\alpha$ gives valid local coordinates (which we call the surface normal coordinates).
Furthermore, when we have valid coordinates, the data also determines some properties of the metric in these coordinates.

\begin{theorem}
\label{thm:Riemann}
Let $(M_i,F_i)$ for $i=1,2$ be two Finsler geodesically complete manifolds without boundary and $U_i\subset M_i$ open subsets.
Suppose there is an isometry $\psi\colon U_1\to U_2$ so that up to identification by~$\psi$ the two manifolds have the same sphere data: $SD(U_1,M_1,F_1)=\psi^*SD(U_2,M_2,F_2)$.

Let $\Sigma_1\subset U_1$ be a visible smooth surface and $\alpha_1\colon\R^{n-1}\supset\Omega\to\Sigma_1$ any local coordinates.
Let $\alpha_2=\psi\circ\alpha_1$ be the analogous coordinates on the corresponding visible smooth surface $\Sigma_2=\psi(\Sigma_1)\subset U_2$.
Define the maps $\phi_{\alpha_i}\colon\Omega\times\R\to M_i$ as in~\eqref{eq:snc-def}.

For any $z\in\Omega$ and $t<0$, the differential $\der\phi_{\alpha_1}(z,t)$ is bijective if and only if $\der\phi_{\alpha_2}(z,t)$ is too.
In this case, both $\phi_{\alpha_i}|_{\Omega'}$ define local coordinates on $M_i$ for some neighborhood $\Omega'\subset\Omega\times(-\infty,0)$ of $(z,t)$.
Let~$G_i$ be the vector field corresponding to the surface normal coordinate system as defined above and let $g^i=g^i_{G_i}$ be the Riemannian metric tensor corresponding to it.

Then the local Riemannian metrics agree in these local coordinates: $g^1=g^2$ on~$\Omega'$.
\end{theorem}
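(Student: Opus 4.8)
The plan is to reduce both assertions to statements about normal Jacobi fields along the normal geodesics emanating from $\Sigma_i$, and then to feed these into Theorem~\ref{thm:RU}.

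First I would fix $(z,t)$ with $t<0$ and observe that $\tau\mapsto\phi_{\alpha_i}(z,\tau)$ is by construction the unit speed geodesic $\gamma_i$ with $\gamma_i(0)=\alpha_i(z)\in\Sigma_i$ and $\dot\gamma_i(0)=\nu_i(\alpha_i(z))$, the outward unit normal. Since $\psi$ is an isometry carrying $\Sigma_1$ onto $\Sigma_2$ with $\alpha_2=\psi\circ\alpha_1$, its differential sends normal directions to normal directions, so $\gamma_2$ has initial data $\der\psi(\gamma_1(0),\dot\gamma_1(0))$ and Theorem~\ref{thm:RU} applies to the pair $\gamma_1,\gamma_2$: under $\der\psi\colon N_0^1\to N_0^2$ one has $\hat R_1(\tau)=\hat R_2(\tau)$ and $\hat U_1(\tau,\sigma)=\hat U_2(\tau,\sigma)$ for all $\tau,\sigma\le0$, with no restriction on $\tau,\sigma$ by geodesic completeness. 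Next I would identify, for each coordinate direction $\partial_{z^a}$ on $\Omega$, the vector field $\tau\mapsto\der\phi_{\alpha_i}(z,\tau)[\partial_{z^a}]$ along $\gamma_i$ as the Jacobi field $J_a^i$ generated by the variation through the neighbouring normal geodesics $\gamma_{i,z+se_a}$; its initial data is $J_a^i(0)=\der\alpha_i(z)[\partial_{z^a}]$, tangent to $\Sigma_i$, and $D_tJ_a^i(0)$ equal to the covariant derivative of the unit normal field along $\Sigma_i$ in that direction, i.e. (up to sign) the shape operator of $\Sigma_i$ applied to $J_a^i(0)$. By the Finsler Gauss lemma $T_{\alpha_i(z)}\Sigma_i\subset N_0^i$, and differentiating $F_i^2(\nu_i)\equiv1$ shows $D_tJ_a^i(0)\in N_0^i$; hence $J_a^i$ is a normal Jacobi field, while $\der\phi_{\alpha_i}(z,\tau)[\partial_t]=\dot\gamma_i(\tau)\notin N_\tau^i$. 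Because $T_{\gamma_i(\tau)}M_i=N_\tau^i\oplus\R\dot\gamma_i(\tau)$, the differential $\der\phi_{\alpha_i}(z,\tau)$ is bijective precisely when $J_1^i(\tau),\dots,J_{n-1}^i(\tau)$ span $N_\tau^i$.

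For the bijectivity equivalence I would note that $\psi$, being an isometry that identifies $\Sigma_1$ with $\Sigma_2$, preserves tangent spaces, normals and shape operators, so $\der\psi$ carries $(J_a^1(0),D_tJ_a^1(0))$ to $(J_a^2(0),D_tJ_a^2(0))$. Transporting $J_a^i(\tau)$ to $N_0^i$ by the parallel frame identification, which is an isometry for $g_{\dot\gamma_i}$, gives $\operatorname{pr}_1\hat U_i(\tau,0)(J_a^i(0),D_tJ_a^i(0))$, and since $\hat U_1(\tau,0)=\hat U_2(\tau,0)$, the compound identification (parallel transport together with $\der\psi$) matches $J_a^1(\tau)$ with $J_a^2(\tau)$ for every $a$. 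Therefore $\{J_a^1(t)\}_a$ spans $N_t^1$ if and only if $\{J_a^2(t)\}_a$ spans $N_t^2$, which is the claimed equivalence of bijectivity of the two differentials. When this holds, the inverse function theorem yields a neighbourhood $\Omega'\ni(z,t)$ inside $\Omega\times(-\infty,0)$ on which each $\phi_{\alpha_i}$ is a diffeomorphism onto its image, defining the surface normal coordinates and the vector fields $G_i$. For the agreement of the metrics I would then work in the $(z',\tau')$ coordinates on $M_i$ supplied by $\phi_{\alpha_i}$ on $\Omega'$, where $\partial_t$ pushes forward to $G_i$, which along the normal geodesic $\gamma_{i,z'}$ through $\alpha_i(z')$ is $\dot\gamma_{i,z'}$, and each $\partial_{z^a}$ pushes forward to the Jacobi field $J_a^i$ along $\gamma_{i,z'}$. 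Hence at $(z',\tau')\in\Omega'$ the components of $g^i=g^i_{G_i}$ are $g^i_{nn}=g_{\dot\gamma_{i,z'}(\tau')}(\dot\gamma_{i,z'}(\tau'),\dot\gamma_{i,z'}(\tau'))=1$, $g^i_{an}=g_{\dot\gamma_{i,z'}(\tau')}(J_a^i(\tau'),\dot\gamma_{i,z'}(\tau'))=0$ since the $J_a^i$ are normal, and $g^i_{ab}=g_{\dot\gamma_{i,z'}(\tau')}(J_a^i(\tau'),J_b^i(\tau'))$, the Gram matrix of the $J_a^i(\tau')$ for the inner product on $N_{\tau'}^i$; via the isometric parallel frame identification this is the Gram matrix in $N_0^i$ of the vectors $\operatorname{pr}_1\hat U_i(\tau',0)(J_a^i(0),D_tJ_a^i(0))$. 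Repeating the reasoning above along $\gamma_{i,z'}$ instead of the central geodesic — it is again matched across $i=1,2$ by $\psi$ together with its initial Jacobi data — and using that $\der\psi\colon N_0^1\to N_0^2$ is an isometry while $\hat U_1(\tau',0)=\hat U_2(\tau',0)$, these Gram matrices coincide, so $g^1_{\mu\nu}=g^2_{\mu\nu}$ at every point of $\Omega'$.

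The main obstacle I expect is the bookkeeping connecting the abstract data furnished by Theorem~\ref{thm:RU} with the concrete metric: one must recognize that the pulled-back fundamental tensor in the distinguished direction $G_i$ is, in surface normal coordinates, the Gram matrix of the normal Jacobi fields written in a parallel orthonormal frame, and that the initial data of these Jacobi fields is itself extrinsic data of $\Sigma_i\hookrightarrow M_i$ and hence transported by $\psi$. Making this watertight requires the Finsler Gauss lemma, the isometry property of Chern parallel transport for $g_{\dot\gamma}$ along a geodesic, and careful compatibility of the three identifications in play ($\der\psi$, parallel transport, and the $N_0$-identification of Theorem~\ref{thm:RU}) applied simultaneously along every normal geodesic $\gamma_{i,z'}$ meeting $\Omega'$, not just the one through $(z,t)$.
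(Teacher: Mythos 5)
Your proposal is correct and follows essentially the same route as the paper: reduce the differential of the surface normal exponential map to normal Jacobi fields with initial data $(v,Sv)$ given by the shape operator of $\Sigma_i$ (the paper isolates this as lemma~\ref{lma:d-sne}), invoke theorem~\ref{thm:RU} to match these Jacobi fields across the two manifolds in a parallel orthonormal frame, and read off the metric components as $g_{nn}=1$, $g_{an}=0$, and the Gram matrix of the normal Jacobi fields. The extra care you flag about the three identifications and about running the argument along every normal geodesic through $\Omega'$ is handled implicitly in the paper, so there is no substantive difference.
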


\begin{figure}[ht]
    \centering
    \includegraphics[scale=0.5]{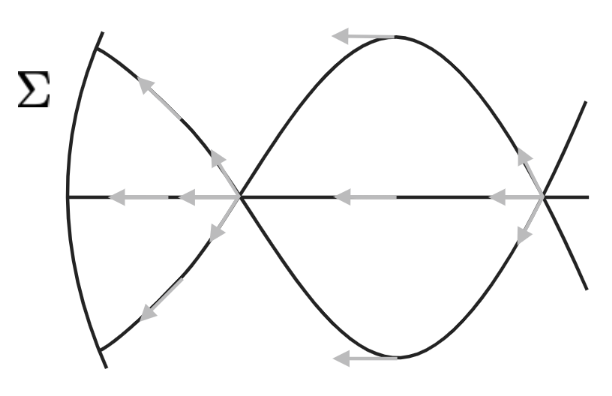}
    \caption{The setting of theorem~\ref{thm:Riemann}:
    The surface normal exponential map of a visible smooth sphere~$\Sigma$ follows geodesics from~$\Sigma$ backwards into the manifold. If the geodesics are lifted to the tangent bundle (indicated with gray arrows), the ``lifted normal exponential map'' $\overline{\phi_\alpha}\colon\Omega\times\R\to TM$ is always an immersion.
    The so defined (local) submanifold of~$TM$ is always smooth but the usual normal exponential map~$\phi_\alpha$ fails to be a local diffeomorphism at the center point of the sphere (on the right) and points conjugate to it (between the center and~$\Sigma$).
    These points are focal to~$\Sigma$.
    At these conjugate points the submanifold of~$TM$ has a small projection to the base but has many points in the same fiber, illustrated by several arrows at the intersection points.
    The gray arrows define the vector field~$G$, and it is a local vector field on~$M$ when the surface normal coordinates actually give local coordinates.}
    \label{fig:sne}
\end{figure}

If the manifolds~$M_1$ and~$M_2$ are Riemannian and they have the same sphere data, then their metric tensors agree in boundary normal coordinates.
In Finsler geometry we can only ever hope to recover the Finsler functions~$F_i$ in directions close to the vector field~$G_i$, as other directions might not correspond to geodesics that come to the measurement sets $U_i\subset M_i$.
It is this directional nature of Finsler geometry that makes it natural to state the results along geodesics rather than on the manifold~$M$.

\begin{remark}
The singular set of~$\phi_{\alpha}$ corresponds to focal points of the visible smooth sphere~$\Sigma$.
Therefore it follows from theorem~\ref{thm:Riemann} that the sphere data determines all the focal distances at all points on all the visible smooth spheres.
See section~\ref{sec:snc} for details and figure~\ref{fig:sne} for an illustration.
\end{remark}

We assumed geodesic completeness for technical convenience.
It can be left out, but then more care is needed in the statement of the theorem to ensure that the relevant geodesics exist on both manifolds.

We point out that the surface normal coordinates of a visible smooth sphere correspond to the normal or polar coordinates of the center point of that sphere.
If we choose the center to be within the known set $U\subset M$, then theorem~\ref{thm:Riemann} gives the local Riemannian metric in these normal coordinates when they are valid.

\begin{remark}
Our main result, theorem~\ref{thm:RU}, is a result along geodesics, and its corollary, theorem~\ref{thm:Riemann}, gives a very local result in small open sets.
By ``very local'' we mean that the local Riemannian metric of the Finsler metric is determined, not the Finsler metric itself in a small open set.
We have chosen to leave out questions of globalization from the present paper.
If the manifold were Riemannian, then theorem~\ref{thm:Riemann} would give the metric tensor in some local coordinate system, thus giving a full description of the local geometry.
In Finsler geometry the information given by the theorem is far from knowing the Minkowski norm on every tangent space in every direction --- it does not even give the norm in a neighborhood of the direction of~$G$ in every tangent space.
The vector field~$G$ is illustrated in figure~\ref{fig:sne}; from the point of view of a single visible smooth sphere and a geodesic through it, we have only access to multiple directions at a point if the point is focal to the visible smooth sphere, but then we do not have a neighborhood in the base.
Therefore the promotion from a single geodesic to a global conclusion is a far bigger step than in Riemannian geometry and it is best taken separately.
Additional assumptions are needed for the conclusion of~\cite{Riemann-Dix}, unique determination of the metric universal cover, to hold.
\end{remark}

Theorem~\ref{thm:RU} is proved in section~\ref{sec:pf-thm-RU} and theorem~\ref{thm:Riemann} in section~\ref{sec:pf-thm-Riemann}.

\subsection{Finsler geometry from seismology}
\label{sec:geo-intro}

A certain class of elastic waves, namely quasi-compressional waves (\textit{qP} waves) follow the geodesics of a Finsler metric.
The Finsler metric arises from the stiffness tensor which describes the anisotropic elastic medium.
The unit cosphere of the Finsler geometry is the \textit{qP} branch of the so-called slowness surface.
In each fiber the Finsler function is given by finitely many elastic parameters and is in fact reversible and fiberwise analytic.
This is very similar to the way every Riemannian metric is fiberwise analytic.
We do not make these assumptions for a few reasons: they turn out to be unimportant, and there may be other physical models where they do not hold.

If one measures in a small open set the wave fronts of \textit{qP}-waves arriving from a point-like event inside the Earth and knows the travel times, then one obtains the sphere data as given in definition~\ref{def:SD}.
Elastic Finsler metrics are in fact reversible, but out of pure geometrical interest we do not make this assumption.

For more details on elastic Finsler geometry, we refer to the discussions in~\cite{dHILS:BDF,dHILS:BSR}.
Finsler geometry in the context of seismology was already mentioned in~\cite{GeoFinsler1,GeoFinsler2,GeoFinsler3,GeoFinsler4}.
For a geophysical discussion of the state of the art of Dix's problem in anisotropic media, see~\cite{Dix99,Riemann-Dix}.
In~\cite{Dix99} points on reflectors are considered as point sources.

\section{Proof of the main theorem}

This section is devoted to the proof of our main result, theorem~\ref{thm:RU}.

\subsection{Curvature, shape, and Jacobi fields}

To set up the tools for proving our results, we expand on the presentation of sections~\ref{sec:operator-intro} and~\ref{sec:frame-intro}.
To keep notation as simple as possible, we denote covariant derivatives along the geodesic~$\gamma$ by~$D_t$ instead of~$\nabla_{\dot\gamma(t)}$.

In terms of the frame (see section~\ref{sec:frame-intro}), the directional curvature operator~$R(t)$ along the Finsler geodesic is given by
\begin{equation}
R(t)
=
\sum_{i,k=1}^{n-1}
(R_{\dot\gamma(t)}(\gamma(t),\dot\gamma(t)))^i_kf_i(t)f^k(t).
\end{equation}
Using the fixed frame, we can also regard~$R(t)$ simply as a matrix depending on~$t$.
This corresponds to the operator~$\hat R(t)$ of~\eqref{eq:hat-R} upon identifying~$N_0$ with~$\R^{n-1}$ through an orthonormal basis.
But we shall drop these identifications and the hat and simply consider~$R(t)$ as a time-dependent matrix.

Our reconstruction algorithm works in the reverse direction along a geodesic because we want the original measured signal to come in the forward direction.
The signs will be different from the related Riemannian result~\cite{Riemann-Dix}.

Consider the forward sphere of radius $r-t>0$ centered at~$\gamma(t)$. 
If~$\gamma(t)$ and~$\gamma(r)$ are not conjugate along~$\gamma$, then this defines locally a smooth surface~$\Sigma(r,t)$ near~$\gamma(r)$ and varying~$r$ with~$t$ fixed foliates a neighborhood of~$\gamma(r)$ with the hypersurfaces~$\Sigma(r,t)$.
Let~$\nu$ be the normal vector field of these surfaces oriented in the same direction as~$\dot\gamma(r)$.
It is a well defined smooth vector field near~$\gamma(r)$ and has unit length at each point.

We define a Riemannian metric~$\hat g$ in this small neighborhood of~$\gamma(r)$ by letting $\hat g(x)=g_{\nu(x)}(x)$.
With respect to this Riemannian metric we can define the shape operator~$S(r,t)$ of~$\Sigma(r,t)$ at~$\gamma(r)$ as we would in the Riemannian case.
More explicitly, the action of $S(r,t)$ on $w\in T_{\gamma(r)}\Sigma(r,t)=\dot\gamma(r)^\perp\subset T_{\gamma(r)}M$ is $S(r,t)w=\nabla_w\nu$, the covariant derivative of the normal field with respect to the Riemannian metric corresponding to the normal vector field.
Expressed in terms of the frame, $S(r,t)$ is a matrix depending on two parameters.

Notice that the very construction of~$S(r,t)$ depends on the two points not being conjugate.
Difficulties brought by conjugate points are an important aspect of our inverse problem.
However, we have diminished the role of conjugate points in our approach in comparison to the Riemannian one in~\cite{Riemann-Dix}.
In lemma~\ref{lma:JSKR-unique} we will solve Jacobi fields from a system of ODEs and the Jacobi fields are well-defined everywhere, whereas in~\cite{Riemann-Dix} the inverse shape operator~$K$ was solved, and it is not defined everywhere if there are conjugate or focal points.

The operators~$R(t)$ and~$S(r,t)$ are related via the Riccati equation~\cite[Lemma~14.4.2]{Shen}
\begin{equation}
\label{eq:riccati}
\nabla_{\dot\gamma(r)}
S(r,t)+S(r,t)^2+R(r)
=
0,
\end{equation}
which holds for any $r,t\in\R$ for which the corresponding points on~$\gamma$ are not conjugate.
Here the covariant derivative~$\nabla$ of the local Riemannian metric associated with the family of geodesics that give rise to the surface~$\Sigma(r,t)$.
For simplicity, we will write
$
\nabla_{\dot\gamma(r)}
S(r,t)
=
D_r
S(r,t)
$.

Let us then study the asymptotics of~$S(r,t)$ near $r=t$.
We denote $K(r,t)=S(r,t)^{-1}$ whenever~$S(r,t)$ is invertible.
Invertibility can indeed fail:
If one equips the sphere~$S^n$ with the usual round Riemannian metric, then $S(r,t)=0$ when $r=t+\frac\pi2$, and this occurs before the first conjugate point ($r=t+\pi$).

\begin{lemma}
\label{lma:K-taylor}
For~$r$ sufficiently close to~$t$ but $r\neq t$, the shape operator~$S(r,t)$ is invertible and we have
\begin{equation}
\label{eq:K-taylor}
K(r,t)
=
(r-t)\id
+\frac{(r-t)^3}3R(r)
+\order(\abs{r-t}^3).
\end{equation}
\end{lemma}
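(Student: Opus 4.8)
The plan is to derive the asymptotics of $K(r,t)$ from the Riccati equation~\eqref{eq:riccati} together with the known initial behavior of the shape operator of a small geodesic sphere. First I would establish the leading-order behavior of $S(r,t)$ as $r\to t^+$. The surface $\Sigma(r,t)$ is the forward sphere of small radius $r-t$ centered at $\gamma(t)$, so at $\gamma(r)$ its shape operator behaves like that of an infinitesimal sphere: $S(r,t)=\frac{1}{r-t}\id+\Order(1)$ as $r\to t$, exactly as in Riemannian geometry (this is a statement purely about the local Riemannian metric $\hat g$ along the geodesic, which controls everything here). In particular $S(r,t)$ is invertible for $r$ near $t$ with $r\neq t$, which gives the first assertion of the lemma.

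Next I would write $K(r,t)=S(r,t)^{-1}$ and use the standard identity $D_rK = -K\,(D_rS)\,K$ together with the Riccati equation $D_rS + S^2 + R(r)=0$ to obtain a Riccati-type equation for $K$:
\begin{equation}
D_rK(r,t) = \id + K(r,t)\,R(r)\,K(r,t).
\end{equation}
From the leading behavior $S=\frac{1}{r-t}\id+\Order(1)$ we get $K(r,t)=(r-t)\id+\Order((r-t)^2)$. I would then bootstrap: plugging $K=(r-t)\id+\Order((r-t)^2)$ into the right-hand side gives $D_rK=\id+(r-t)^2R(r)+\Order((r-t)^2\cdot(r-t))$; but I need the correction term more carefully. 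One checks that the $\Order((r-t)^2)$ term in $K$ in fact vanishes — by symmetry considerations, or by noting that the next term must integrate from $0$, so $K(r,t)=(r-t)\id + A(r,t)$ with $A=\order((r-t)^2)$ — and then
\begin{equation}
D_rA(r,t) = K R K - (r-t)^2 R(r) + \bigl(\text{lower order}\bigr) = (r-t)^2 R(r) + \order((r-t)^2),
\end{equation}
using $K=(r-t)\id+\order((r-t)^2)$ and continuity of $R$. Integrating in $r$ from the base point (where $A$ vanishes to the appropriate order) yields $A(r,t)=\frac{(r-t)^3}{3}R(r)+\order(|r-t|^3)$, since $R(r)=R(t)+\Order(|r-t|)$ and the integral of $(r-t)^2$ is $\frac{(r-t)^3}{3}$; absorbing the error from replacing $R(r)$ under the integral into the remainder gives exactly~\eqref{eq:K-taylor}.

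The main obstacle I anticipate is making the initial asymptotics $S(r,t)=\frac{1}{r-t}\id+\Order(1)$ rigorous and uniform enough to launch the bootstrap — in particular, controlling the $\Order(1)$ term and the error terms in a way that survives the matrix inversion and the integration, and justifying that the putative $\Order((r-t)^2)$ term in $K$ is actually $\order((r-t)^2)$ so that the $\frac{(r-t)^3}{3}R(r)$ term is genuinely the first nontrivial correction. This is essentially a Riemannian computation with the metric $\hat g$, but one must be careful that $\hat g$ itself depends on the direction field $\nu$, so the covariant derivative $D_r$ is the one compatible with the family of spheres; fortunately~\eqref{eq:riccati} is stated in precisely that setting, so no extra Finsler subtlety enters beyond bookkeeping. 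Once the leading term and the Riccati equation for $K$ are in hand, the rest is a routine Taylor expansion.
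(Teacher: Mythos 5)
Your argument is correct, but it takes a genuinely different route from the paper. The paper's proof is essentially a citation: it quotes the full second-order expansion $S(r,t)=(r-t)^{-1}\id-\frac{r-t}{3}R(r)+\order(\abs{r-t})$ from \cite[Theorem~14.4.3]{Shen}, reads off invertibility, and inverts with a Neumann series to get~\eqref{eq:K-taylor}. You instead derive the cubic term of $K$ from the Riccati equation~\eqref{eq:riccati}: with $K=S^{-1}$ one gets $D_rK=\id+KRK$, and integrating from the diagonal using only the leading asymptotics $K=(r-t)\id+\order(r-t)$ gives $K-(r-t)\id=\int_t^r\left((s-t)^2R(s)+\order((s-t)^2)\right)\dd s=\frac{(r-t)^3}{3}R(r)+\order(\abs{r-t}^3)$. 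What your route buys is that it needs strictly less input --- only the leading singularity $S\sim(r-t)^{-1}\id$ of the shape operator of a small sphere, not its expansion up to the curvature term --- at the cost of carrying out the Riccati manipulation yourself; in the Finsler setting even that leading term is most conveniently taken from the same theorem of Shen, so the saving is modest. Two small points: the aside about checking that the $\Order((r-t)^2)$ term of $K$ ``vanishes by symmetry'' is an unnecessary detour, since the integration does this automatically ($D_r(K-(r-t)\id)=KRK=\Order((r-t)^2)$ and $K-(r-t)\id\to0$ at the diagonal force the quadratic term to be zero), and your intermediate line $D_rA=KRK-(r-t)^2R(r)+(\text{lower order})$ is a slip --- $D_rA$ equals $KRK$ outright. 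Also, to integrate the covariant derivative you should note that in the parallel frame along $\gamma$ (which the paper sets up) $D_r$ acts as $\partial_r$ on matrix components, so the fundamental theorem of calculus applies without further comment.
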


\begin{proof}
By~\cite[Theorem~14.4.3]{Shen} we have
\begin{equation}
\label{eq:S-taylor}
S(r,t)
=
(r-t)^{-1}\id
-\frac{(r-t)}3R(r)
+\order(\abs{r-t}).
\end{equation}
We may replace~$R(r)$ with~$R(t)$, as this only induces an error $\Order(\abs{t-r})$.
The invertibility result follows immediately.
Using Neumann series for the inverse gives~\eqref{eq:K-taylor}.
\end{proof}

\subsection{Determination of curvature operators and Jacobi fields}

For any $t\in\R$, let~$J(\dummy,t)$ be a linearly independent family of $n-1$ normal Jacobi fields along~$\gamma$ which vanish at~$\gamma(t)$.
We can think of~$J(r,t)$ as an $(n-1)\times(n-1)$ matrix whose every column is a Jacobi field with respect to the variable~$r$.
Geometrically, these Jacobi fields correspond to geodesics emanating from~$\gamma(t)$ in directions close to~$\dot\gamma(t)$.

Every Jacobi field satisfies the Jacobi equation, and that can be written collectively as
\begin{equation}
\label{eq:Jacobi}
D_r^2
J(r,t)+R(r)J(r,t)
=
0.
\end{equation}
The Jacobi fields also satisfy a first-order equation as the next lemma shows.

These requirements do not determine the matrix-valued function~$J(r,t)$ uniquely.
It is only unique up to multiplication from the right by an invertible matrix depending on~$t$.
The matrix nature of our operators will be explained in more detail soon.

\begin{lemma}
\label{lma:1order-Jacobi}
The Jacobi fields satisfy
\begin{equation}
\label{eq:DJ=SJ}
D_r
J(r,t)
=
S(r,t)J(r,t)
\end{equation}
whenever the shape operator~$S(r,t)$ is defined.
\end{lemma}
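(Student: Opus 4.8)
The plan is to restate the claim as an identity between two solutions of the same matrix Riccati equation and then settle it by a uniqueness argument tailored to the singularity at $r=t$. Recall that the Jacobi matrix $J(\dummy,t)$ --- unique up to right multiplication by a constant invertible matrix --- is invertible exactly when $\gamma(r)$ and $\gamma(t)$ are not conjugate along $\gamma$, which is precisely the range of $r$ for which $S(r,t)$ is defined. On that range put $\tilde S(r):=D_rJ(r,t)\,J(r,t)^{-1}$; this does not depend on the choice of $J(\dummy,t)$, and on this common domain~\eqref{eq:DJ=SJ} is equivalent to the identity $\tilde S(r)=S(r,t)$. I would carry out all computations in the parallel frame $f_1,\dots,f_{n-1}$ of the normal bundle, in which $g_{\dot\gamma}$ is represented by the identity matrix at every point, $R(r)$ by a symmetric matrix, and $D_r$ acts as the ordinary derivative of matrix entries.

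The first step is to record two structural facts about $\tilde S$. Differentiating $\tilde S=D_rJ\,J^{-1}$, using $D_rJ^{-1}=-J^{-1}(D_rJ)J^{-1}$ and the Jacobi equation~\eqref{eq:Jacobi} in the form $D_r^2J=-R(r)J$, yields $D_r\tilde S+\tilde S^2+R(r)=0$; that is, $\tilde S$ satisfies the Riccati equation~\eqref{eq:riccati} with $S$ replaced by $\tilde S$. Next, $\tilde S$ is symmetric: the matrix $W(r):=J^\top D_rJ-(D_rJ)^\top J$ satisfies $D_rW=J^\top D_r^2J-(D_r^2J)^\top J=-J^\top R(r)J+J^\top R(r)^\top J=0$ by the Jacobi equation and the symmetry of $R(r)$, and $W(t)=0$ because $J(t,t)=0$, so $W\equiv0$ and hence $\tilde S^\top=\tilde S$. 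On the other hand $S(r,t)$ is symmetric --- it is the shape operator of a hypersurface in the Riemannian metric $\hat g$, hence self-adjoint for $\hat g$, and $\hat g$ coincides with $g_{\dot\gamma}$ at $\gamma(r)$ --- and it solves~\eqref{eq:riccati}. Consequently $E:=\tilde S-S$ is symmetric and solves the exact linear equation $D_rE=-\tilde SE-ES$.

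It remains to prove $E\equiv0$, and this is where the only genuine difficulty lies: the Riccati equation degenerates at $r=t$, where both $S$ and $\tilde S$ blow up like $(r-t)^{-1}\id$, so ordinary ODE uniqueness does not apply at that endpoint. The key is that the two blow-ups coincide: by~\eqref{eq:S-taylor} one has $S(r,t)=(r-t)^{-1}\id+\Order(\abs{r-t})$, while Taylor expanding $J(\dummy,t)$ at $r=t$ --- using $J(t,t)=0$ and that $D_rJ|_{r=t}$ is invertible, the latter because the Jacobi fields are linearly independent --- gives $\tilde S(r)=(r-t)^{-1}\id+\Order(1)$. Hence $E$ stays bounded near $r=t$. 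Setting $u(r):=\operatorname{tr}(E(r)^2)=\aabs{E(r)}^2$ and using symmetry of $E$ together with $S,\tilde S=(r-t)^{-1}\id+\Order(1)$, the linear equation for $E$ gives $u'(r)=-\tfrac{4}{r-t}\,u(r)+\Order(u(r))$, so that $(r-t)^4u(r)$, up to a bounded positive integrating factor, is constant on the connected component of the domain having $t$ as an endpoint; letting that endpoint tend to $t$ and using boundedness of $u$ forces this constant to vanish, hence $u\equiv0$ and $E\equiv0$ there, which is~\eqref{eq:DJ=SJ}. On a connected component of the domain of $S(\dummy,t)$ not abutting $r=t$ the same estimate applies at either conjugate time bounding it (there $S$ and $\tilde S$ again blow up in the same way, along the kernel of the limiting Jacobi matrix); equivalently, $S(r,t)$ is by construction the shape operator of the geodesic sphere through $\gamma(r)$ centred at $\gamma(t)$, i.e.\ the symmetric endomorphism whose graph is the Lagrangian subspace transported by the Jacobi flow, which is exactly $\tilde S(r)$. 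The main obstacle throughout is this singular behaviour at $r=t$; the rest is routine matrix calculus.
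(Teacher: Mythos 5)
Your core argument is correct on the main part of the domain, but it takes a genuinely different route from the paper's. The paper argues in the opposite direction: it takes a solution $\tilde J$ of the first-order equation \eqref{eq:DJ=SJ}, checks via the Riccati equation \eqref{eq:riccati} that $\tilde J$ also solves the Jacobi equation \eqref{eq:Jacobi}, and then uses the blow-up of $S(r,t)$ at $r=t$ to force $\tilde J(t,t)=0$, so that $\tilde J$ agrees with $J(\dummy,t)$ up to a change of basis. You instead form $\tilde S=D_rJ\,J^{-1}$ from the given Jacobi matrix and prove $\tilde S=S$ by a uniqueness argument for the Riccati equation anchored at the singular endpoint $r=t$. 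Both proofs rest on the same two inputs (the Riccati equation and the expansion \eqref{eq:S-taylor}); yours trades a linear ODE comparison for a nonlinear one plus an integrating factor, and it is more explicit about the one step the paper treats lightly, namely why matching the leading singularity $(r-t)^{-1}\id$ pins the solution down. The symmetry of $\tilde S$ via $W=J^\top D_rJ-(D_rJ)^\top J$ is a correct and self-contained ingredient.

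The genuine gap is in your last few lines, and you half-noticed it yourself. The Gr\"onwall argument for $u=\operatorname{tr}(E^2)$ only runs on the connected component of $\{r: S(r,t)\text{ is defined}\}$ having $r=t$ as an endpoint, since that is where you know $E=\tilde S-S$ is bounded and where the coefficient $-4(r-t)^{-1}$ has the favourable sign as you integrate away from $t$. The lemma is asserted --- and used later, e.g.\ in Steps 2 and 4 of the proof of lemma~\ref{lma:JSKR-unique} --- wherever $S(r,t)$ is defined, including components separated from $r=t$ by conjugate times. For those, your first fix (``$S$ and $\tilde S$ blow up in the same way along the kernel of the limiting Jacobi matrix'') is unsubstantiated: the only asymptotics available for $S$ is \eqref{eq:S-taylor}, an expansion at $r=t$, not at an interior conjugate time $r_0$; deriving the blow-up profile of $S(r,t)$ as $r\to r_0$ essentially requires relating $S$ to the Jacobi fields there, which is the statement being proved. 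Moreover the blow-up at $r_0$ is only rank-deficient, so the trace trick would control only $\operatorname{tr}(PE^2)$ for the kernel projection $P$, not all of $u$. Your second fix --- that $S(r,t)$ is ``the symmetric endomorphism whose graph is the Lagrangian subspace transported by the Jacobi flow, which is exactly $\tilde S$'' --- is the lemma restated, hence circular. To close the gap you need an argument valid at every non-conjugate $r$ simultaneously, e.g.\ the direct geometric computation $S\,J=\nabla_J\nu=D_rJ$ coming from the symmetry of the connection applied to the geodesic variation emanating from $\gamma(t)$. (In fairness, the paper's own proof also anchors its solution only at $r=t$ and is terse on this same point.)
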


\begin{proof}
The shape operator was only defined when~$\gamma(t)$ and~$\gamma(r)$ are not conjugate along~$\gamma$.
The curvature operator~$R(t)$ and the shape operator~$S(r,t)$ are related via the Riccati equation~\eqref{eq:riccati}, 
which holds for any $r,t\in\R$ for which the corresponding points on~$\gamma$ are not conjugate.

It follows from standard ODE theory that for any fixed~$t$ there is a solution to~\eqref{eq:DJ=SJ} and the solution is unique up to change of basis if~$J$ is assumed invertible at some point.
The shape operator annihilates vectors parallel to the reference geodesic, whence the rows of a solution~$J$ stay normal to~$\dot\gamma(r)$ if they are normal at some point.

If a matrix~$\tilde J(r,t)$ whose columns are linearly independent and normal to~$\dot\gamma(r)$ satisfies~\eqref{eq:DJ=SJ}, then it is easy to check using~\eqref{eq:riccati} that it also satisfies~\eqref{eq:Jacobi}.
Since Jacobi fields stay bounded as $r\to t$ and the asymptotic formula~\eqref{eq:S-taylor} indicates that~$S(r,t)$ blows up in this limit, we must have $\tilde J(r,t)=\Order(\abs{r-t})$ and thus $\tilde J(r,r)=0$.
For any $t\in\R$ there is an $(n-1)$-dimensional space of normal Jacobi fields vanishing at~$\gamma(t)$ and~$J(\dummy,t)$ was defined so that it spans this space.
Therefore, up to the freedom of changing basis on this space, the functions~$\tilde J(\dummy,t)$ and~$J(\dummy,t)$ agree.
In particular, the Jacobi fields~$J(r,t)$ solve~\eqref{eq:DJ=SJ}.
\end{proof}


Let us now make our use of the frames explicit for the sake of concreteness.
There are unique matrix-valued functions~$\m j(r,t)$, $\m s(r,t)$ and~$\m r(r)$ so that
\begin{equation}
J(r,t)
=
\sum_{j,k=1}^{n-1}
\m j_{jk}(r,t)f_j(r)e^k,
\end{equation}
where $e^k\in\R^{n-1}$ are the Euclidean basis vectors, and
\begin{equation}
S(r,t)
=
\sum_{j,k=1}^{n-1}
\m s_{jk}(r,t)f_j(r)f^k(r)
\end{equation}
and
\begin{equation}
R(r)
=
\sum_{j,k=1}^{n-1}
\m r_{jk}(r)f_j(r)f^k(r).
\end{equation}
Similarly, when the shape operator is invertible, we let $\m k(r,t)=\m s(r,t)^{-1}$, corresponding to the operator~$K(r,t)$ defined above.
The matrix-valued shape operator~$\m s(r,t)$ is not defined when the corresponding two points on the geodesic are conjugate, just like with~$S(r,t)$.

One could freely identify $R(t)=\m r(t)=\hat R(t)$ and similarly for the other quantities, but we find that the distinction adds some clarity to the proof.

Let us reiterate the geometrical interpretation of~$J(r,t)$:
For any $t\in\R$ and $v\in\R^{n-1}$, the vector field $r\mapsto \sum_{j,k=1}^{n-1}\m j_{jk}f_j(r)v_k$ is a normal Jacobi field along~$\gamma$ which vanishes at $r=t$.
The Jacobi fields corresponding to linearly independent choices of~$v$ are linearly independent.
Since the Jacobi fields are linearly independent and vanish at $r=t$, it follows that the matrix~$\partial_r\m j(r,t)$ is invertible at and near $r=t$.

Let us rewrite our key identities in matrix form.
The second-order equation for the Jacobi fields (cf.~\eqref{eq:Jacobi}) is
\begin{equation}
\label{eq:Jacobi-m}
\partial_r^2\m j(r,t)+\m r(r)\m j(r,t)
=
0.
\end{equation}
The first-order equation for the Jacobi fields (cf.~\eqref{eq:DJ=SJ}) is
\begin{equation}
\label{eq:DJ=SJ-m}
\partial_r\m j(r,t)
=
\m s(r,t)\m j(r,t).
\end{equation}
The asymptotic expansions for~$\m k$ near $r=t$ (cf.~\eqref{eq:K-taylor}) is
\begin{equation}
\label{eq:K-taylor-m}
\m k(r,t)
=
(r-t)I
+\frac{(r-t)^3}3\m r(r)
+\order(\abs{r-t}^3).
\end{equation}
Now we are ready to prove a uniqueness result for Jacobi fields and curvature operators.

\begin{lemma}
\label{lma:JSKR-unique}
Let $I_0\subset I_1\subset\R$ be two nested open intervals.
Knowing the shape operator $S(r,t)$ for all $r\in I_0$ and $t\in I_1$ for which it is defined determines uniquely
\begin{enumerate}
\item the Jacobi fields~$J(r,t)$ up to multiplication by a $t$-dependent invertible matrix from the right,
\item the shape operator~$S(r,t)$ when it is defined,
\item the inverse shape operator~$K(r,t)$ when it is defined, and
\item the curvature operator~$R(t)$
\end{enumerate}
for all $r,t\in I_1$.
\end{lemma}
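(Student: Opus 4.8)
The plan is to reduce everything to recovering the curvature matrix $\m r(t)$ for $t\in I_1$. Once this is available on all of $I_1$, the first order equation~\eqref{eq:DJ=SJ-m}, whose right hand side $\m s(r,t)$ is part of the data, can be integrated over $I_0$ to produce $\m j(\dummy,t)$ there up to right multiplication by a $t$-dependent invertible matrix, and the second order equation~\eqref{eq:Jacobi-m} — now with known $\m r$ — continues these Jacobi fields uniquely to all of $I_1$; this is~(1). Then $\m s(r,t)=\partial_r\m j(r,t)\,\m j(r,t)^{-1}$ wherever it is defined, the right factor cancelling, which is~(2), and $\m k=\m s^{-1}$ on the set where $\m s$ is invertible, which is~(3). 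On the small interval the curvature comes for free: for $r\in I_0$ pick any $t$ with $\gamma(r)$ not conjugate to $\gamma(t)$ — a discrete condition on $t$ — and read off $\m r(r)=-\partial_r\m s(r,t)-\m s(r,t)^2$ from the Riccati equation~\eqref{eq:riccati} in the frame, or use the expansion~\eqref{eq:K-taylor-m} as $r\to t\in I_0$.

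The substance of the lemma is the extension of $\m r$ from $I_0$ to $I_1$, and I would carry it out through a closed ODE system in the source parameter. Fix a reference point $r_0\in I_0$, and for $t\in I_1$ let $E_t$ and $A_t$ be the matrix Jacobi fields along $\gamma$ determined by $E_t(t)=0$, $D_rE_t(t)=\id$ and by $A_t(t)=\id$, $D_rA_t(t)=0$. Differentiating these four conditions in $t$ shows that $\partial_tE_t$ and $\partial_tA_t$ are again matrix Jacobi fields in $r$ and satisfy
\[
\partial_tE_t=-A_t,\qquad \partial_tA_t(r)=E_t(r)\,\m r(t).
\]
Evaluating these identities and their covariant $r$-derivatives at $r=r_0$ turns them into a first order ODE system in $t$ for the quadruple $\bigl(E_t(r_0),\,A_t(r_0),\,D_rE_t(r_0),\,D_rA_t(r_0)\bigr)$ in which the only a priori unknown coefficient is $\m r(t)$. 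This quadruple is a smooth function of $t$ on all of $I_1$, being assembled from honest Jacobi fields, so no conjugate point interferes at the level of the system itself.

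It remains to close the system with the data. For $t\in I_1$ the sphere data determines the space of normal Jacobi fields vanishing at $\gamma(t)$ as read off on $I_0$; equivalently, for $t$ with $\gamma(t)$ not conjugate to $\gamma(r_0)$ it determines $\m s(r_0,t)=D_rE_t(r_0)\,E_t(r_0)^{-1}$ together with its $t$-derivatives. Substituting $D_rE_t(r_0)=\m s(r_0,t)\,E_t(r_0)$ into the system and differentiating twice in $t$ makes the curvature term cancel, leaving
\[
\partial_t^2\m s(r_0,t)\cdot E_t(r_0)=2\,\partial_t\m s(r_0,t)\cdot A_t(r_0),
\]
and the decisive point is that $\partial_t\m s(r_0,t)$ is invertible: a short Wronskian computation, using that $\m s$ is symmetric in the chosen frame, gives $\partial_t\m s(r_0,t)=\bigl(E_t(r_0)\,E_t(r_0)^{\top}\bigr)^{-1}$, which is positive definite. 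Hence $A_t(r_0)=\tfrac12\bigl(\partial_t\m s(r_0,t)\bigr)^{-1}\partial_t^2\m s(r_0,t)\,E_t(r_0)$, and together with $\partial_tE_t(r_0)=-A_t(r_0)$ this is a closed linear ODE in $t$ for $E_t(r_0)$ whose coefficients are built from the data alone. Integrating it from the initial value $E_{t_*}(r_0)$ at some $t_*\in I_0$ — computable because $\m r$ is already known on $I_0$ — determines $E_t(r_0)$, and then $\m r(t)=-E_t(r_0)^{-1}\partial_t^2E_t(r_0)$ for every $t$ not conjugate to $\gamma(r_0)$; the exceptional $t$ form a discrete set and are caught by continuity of $\m r$, or by running the argument with $\m k(r_0,t)$ in place of $\m s(r_0,t)$, or by recasting the system in terms of the globally smooth family of vanishing-Jacobi-field subspaces rather than the slope $\m s(r_0,t)$.

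I expect the extension step to be the main obstacle. One cannot simply integrate the Jacobi equation out of $I_0$, since that would require the very curvature being reconstructed; the curvature must instead be teased out of the way the spheres depend on the source point $t$, and the structural fact that makes the system close — invertibility of $\partial_t\m s(r_0,t)$ — has to be proved. The accompanying difficulty is bookkeeping: the Jacobi fields are genuinely determined only up to right multiplication, and a fixed reference point $r_0$ carries a conjugate locus along which the ``slope'' $\m s(r_0,t)$ blows up even though the underlying Jacobi data stays smooth; setting up the system in terms of that smooth data, rather than the slope, is what lets it be solved in one stroke along the whole geodesic, but it requires some care to see that the reconstruction of $\m r$, and then of $\m j$, $\m s$ and $\m k$, is unaffected by these points.
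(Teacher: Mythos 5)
Your computations check out and your route is genuinely different from the paper's. The paper never tries to recover $\m r$ on $I_1$ first; instead it propagates a closed system of ODEs in the variable $r$ for the eight matrices $\partial_t^k\m j(r,t)$ and $\partial_r\partial_t^k\m j(r,t)$, $k=0,\dots,3$, closing the system by reading the curvature off the diagonal $r=t$ through the third-order Taylor coefficient $\partial_t^3\m k(r,t)|_{t=r}=-2\m r(r)$ of the inverse shape operator; the data enters only through initial conditions at $r=0$ (set up in $I_0\times I_1$ while dodging the conjugate set), and the propagation step itself is completely insensitive to conjugate points. You instead fix an observation point $r_0\in I_0$ and derive a closed linear ODE in the source time $t$ for $E_t(r_0)$, with coefficients built from $\m s(r_0,\dummy)$ and two of its $t$-derivatives. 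Your identities $\partial_tE_t=-A_t$ and $\partial_tA_t=E_t\m r(t)$, the cancellation giving $\partial_t^2\m s\cdot E_t(r_0)=2\,\partial_t\m s\cdot A_t(r_0)$, and the Wronskian fact $\partial_t\m s(r_0,t)=(E_t(r_0)E_t(r_0)^{\top})^{-1}$ are all correct (the last two using symmetry of $\m s$ and $\m r$ in the orthonormal parallel frame). Your scheme needs one fewer $t$-derivative of the data than the paper's, and the monotonicity $\partial_t\m s(r_0,t)>0$ is a nice structural by-product; the price is that your ODE lives at a fixed $r_0$ and its coefficient $(\partial_t\m s)^{-1}\partial_t^2\m s=2A_t(r_0)E_t(r_0)^{-1}$ blows up on the conjugate locus $C_{r_0}$, which must be crossed, not merely filled in afterwards.

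That crossing is the one point you should not leave as a list of alternatives: ``continuity of $\m r$'' alone does not suffice, because the issue is not recovering $\m r$ at the isolated bad times but continuing the determination of $E_t(r_0)$ past them so that $\m r$ can be found beyond. Your other two remedies do work and amount to what the paper does in its Step 2: for $t$ near a bad time $t_c\in C_{r_0}$ choose $r_0'\in I_0$ with $\gamma(r_0')$ not conjugate to $\gamma(t_c)$, transport the pair $(E_t(r_0),D_rE_t(r_0))$ --- the second component being $\m s(r_0,t)E_t(r_0)$ on the good set --- to $r_0'$ using the already-known $\m r|_{I_0}$, and restart the ODE there. With that patching spelled out, your argument is a complete and somewhat more economical proof of the lemma.
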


\begin{proof}
We will prove the result in matrix form as it is more convenient.
We assume that $I_0=(-\eps,\eps)$ and $I_1=(-T,\eps)$ for $0<\eps<T$, so that we prove ``uniqueness to the reverse direction''.
This direction is relevant for our application, and the proof for the other direction is identical.
Combining the two directions gives the full statement.
By translation, we may assume~$I_0$ and~$I_1$ to be of the given form, and at the end one may let $T\to\infty$ to cover the case when~$I_1$ is unbounded.

By making $\eps>0$ sufficiently small we can ensure that the shape operator~$S(r,t)$ is well defined when $(r,t)\in(-\eps,\eps)^2$ and $r\neq t$.
This amounts to saying that the initial data is set on such a short interval that no focal points to the relevant spheres appear.

{\bfseries Step 1: Setting up a system of ODEs.}
For $k\in\{0,1,2,3\}$, let us denote
\begin{equation}
\m j_k(r,t)=\partial_t^k\m j(r,t)
\end{equation}
and
\begin{equation}
\m y_k(r,t)=\partial_r\partial_t^k\m j(r,t).
\end{equation}
Using~\eqref{eq:Jacobi-m} we get the equations
\begin{equation}
\label{eq:eom1}
\begin{cases}
\partial_r\m j_k(r,t)
=
\m y_k(r,t)
\\
\partial_r\m y_k(r,t)
=
-\m r(r)\m j_k(r,t)
\end{cases}
\end{equation}
for all four values of~$k$.

The crucial step is to compute the curvature matrix~$\m r$ in terms of~$\m j_k$ and~$\m y_k$.
Recall that the matrix~$\m y_0(r,t)$ is invertible in a neighborhood of the diagonal $r=t$.
Combining~\eqref{eq:DJ=SJ-m} and the definition of~$\m k(r,t)$ as the inverse of~$\m s(r,t)$ when invertible, we find
\begin{equation}
\label{eq:k=j/y}
\m k(r,t)
=
\m j_0(r,t)
\m y_0(r,t)^{-1}.
\end{equation}
On the other hand, the asymptotic formula~\eqref{eq:K-taylor-m} indicates that
\begin{equation}
\label{eq:D3K=R-m}
\partial_t^3\m k(r,t)|_{t=r}
=
-2\m r(r).
\end{equation}
Combining~\eqref{eq:k=j/y} and~\eqref{eq:D3K=R-m}, we find
\begin{equation}
\label{eq:R=Q-m}
\m r(r)
=
-\frac12 Q(\m j_1(r,r),\m j_2(r,r),\m j_3(r,r),\m y_0(r,r),\m y_1(r,r),\m y_2(r,r)),
\end{equation}
where the function~$Q$ is defined via
\begin{equation}
\label{eq:Q-def}
\begin{split}
&
Q(A_1,A_2,A_3,B_0,B_1,B_2)
\\&\quad=
A_3B_0^{-1}
-3A_2B_0^{-1}B_1B_0^{-1}
\\&\qquad
+6A_1B_0^{-1}B_1B_0^{-1}B_1B_0^{-1}
-3A_1B_0^{-1}B_2B_0^{-1}.
\end{split}
\end{equation}
We used the fact that~$\m j_0(r,r)=0$ to simplify this expression.
This function is only well defined when the matrix~$B_0$ is invertible, and~$\m y_0(r,t)$ is indeed invertible in a neighborhood of the diagonal.
Observe that this~$Q$ is invariant under multiplication of all the input matrices from the right by the same invertible matrix.

The ODE system~\eqref{eq:eom1} can now be recast as
\begin{equation}
\label{eq:eom2}
\begin{cases}
\partial_r\m j_k(r,t)
=
\m y_k(r,t)
\\
\partial_r\m y_k(r,t)
=
\frac12 Q(\m j(r,r),\m y(r,r))\m j_k(r,t),
\end{cases}
\end{equation}
where $k=0,1,2,3$ and we have abbreviated the notation for~$Q$ for legibility.
This is a closed system of ODEs for the matrix-valued functions~$\m j_k$ and~$\m y_k$.
We will prove unique solvability of this system.

{\bfseries Step 2: Setting up initial conditions.}
The process of setting up the initial conditions is summarized in figure~\ref{fig:initial-conditions}.

\begin{figure}
    \centering
    \includegraphics[scale=.75]{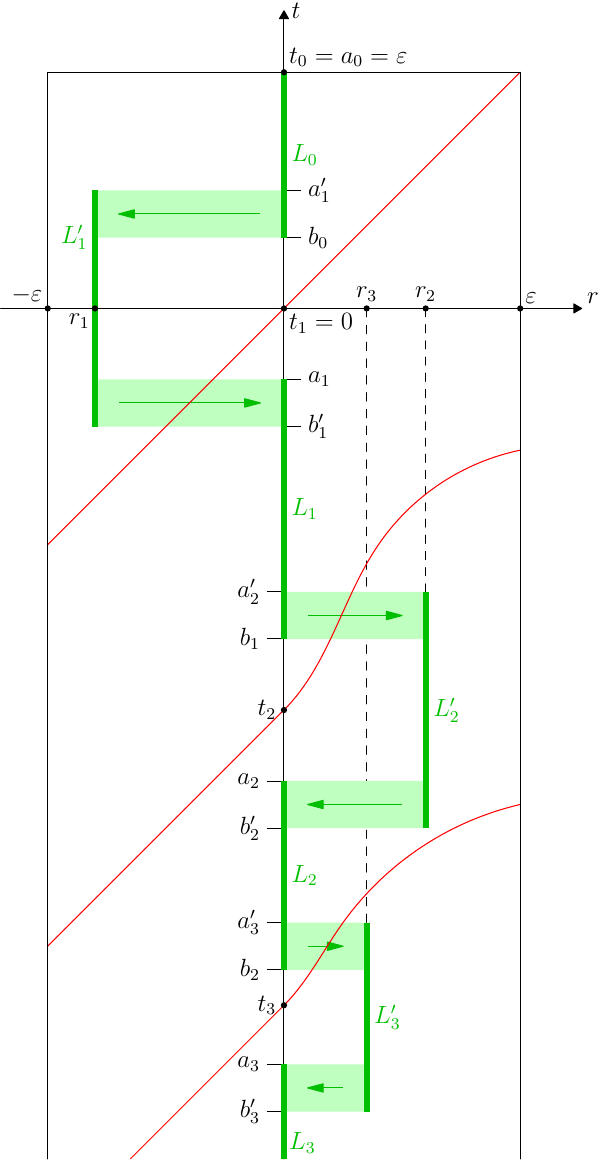}
    \caption{Setting up the initial conditions begins by setting the matrix~$\m j$ to be the identity on the \textcolor{green}{line~$L_0$}. The derivative~$\m y$ is determined by the shape operator included in the data. We then avoid the \textcolor{red}{conjugate locus} as follows. Instead of using the whole $t$-axis which contains conjugate points at $t=t_i$, we only use the \textcolor{green}{segments~$L_i$} on that axis. To work around the conjugate point at $(0,0)$, we take the values of~$\m j$ and~$\m y$ on the lower part of~$L_0$ and solve the Jacobi equation to propagate the data to the upper part of the~$L_1'$, which is depicted as light green with an arrow. We extend the values of~$\m j$ smoothly to the whole of the \textcolor{green}{shifted line~$L_1'$} and choose~$\m y$ as required by the data. Once we are past the conjugate point, we repeat the same process to get back to the $t$-axis. This is repeated at all conjugate points. Solving the Jacobi equation behaves well across conjugate points, whereas smooth extension does not. This way we got to prescribe the initial data smoothly without worrying about consistent singular behaviour at conjugate points.}
    \label{fig:initial-conditions}
\end{figure}

Let us now see why the data determines~$\m r|_{I_0}$.
For $r,t\in I_0$, the shape operator matrix~$\m s(r,t)$ is known when it exists, and it always exists and is invertible when~$r$ and~$t$ are close enough but different; see lemma~\ref{lma:K-taylor}.
The inverse is denoted by~$\m k(r,t)$.
When extended continuously to the diagonal $r=t$ where~$\m s$ is not defined, it is not invertible, but it is continuous and several times differentiable across the diagonal.
The data therefore determines~$\m k(r,t)$ for $r,t\in I_0$ when $\abs{r-t}$ is small enough.
By~\eqref{eq:D3K=R-m} this determines~$\m r|_{I_0}$.

Let us then study setting initial conditions at $r=0$.
We denote
\begin{equation}
C_r
=
\{t\in I_1;\text{$\gamma(t)$ and~$\gamma(r)$ are conjugate or $t=r$}\}.
\end{equation}
For any~$r\in I_1$ the set $C_r\subset I_1$ is discrete since conjugate points cannot accumulate.
The smooth sphere~$\Sigma(r,t)$ exists if and only if $t\notin C_r$; thus the set~$C_r$ for $r\in I_0$ is determined by the data --- specifically the holes in the data.

We are free to choose initial conditions for the Jacobi fields~$\m j(r,t)$ under some constraints:
\begin{itemize}
\item Because $\m j(r,r)=0$, we need to have $\m j(r,t)=\Order(\abs{r-t})$.
\item The matrix $\m j(r,t)$ is invertible if and only if $t\notin C_r$.
\item We want the initial conditions to be regular enough so that $\m j\in C^4$ is possible.
\end{itemize}
To achieve this, we will first set our initial values in a set that avoids conjugate points.

Let us denote $U=I_0\times I_1$ and $C=\bigcup_{r\in I_0}\{r\}\times C_r\subset U$.
The set $C\subset U$ is closed, which follows from continuity of the Jacobi field matrix~$\m j(r,t)$.
In the statement of the theorem, we assumed that $\m s|_{U\setminus C}$ is known.

We will now construct a set of line segments in $U\setminus C$ on which we can set initial values easily.
There are numbers $t_1>t_2>\dots\in\bar I_1$ so that $C_0=\{t_1,t_2,\dots,t_N\}$.
Here the number~$N$ of points may be one, infinity, or any number in between.
Since $0\in C_0$, we have $N\geq1$.
By making $\eps>0$ smaller if necessary, we may assume that $-\eps\notin C_0$.
Let us denote $t_0=-\eps$.
For $\eps>0$ small enough we have in fact $t_1=0$, but we need not assume this.

If $N<\infty$, we let $t_{N+1}=-T$.
To simplify the calculation, we may assume without loss of generality that $-T\notin C_0$.
Namely, we can replace~$T$ with $T-\delta$ for some small $\delta>0$ if necessary to get $-T\notin C_0$ and then get the full result by letting $\delta\to0$ in the end.

For any $i\geq1$, there is $r_i\in I_0$ so that $(r_i,t_i)\notin C$ and the points~$\gamma(0)$ and~$\gamma(r_i)$ are not conjugate.
The second non-conjugacy condition can also be achieved by taking $\eps>0$ so small that there are no conjugate points on~$\gamma|_{I_0}$.
There are numbers $a_i',b_i'\in\R$ so that
\begin{equation}
\frac12(t_{i-1}+t_i)
>
a_i'
>
t_i
>
b_{i}'
>
\frac12(t_i+t_{i+1})
\end{equation}
and $\{r_i\}\times[b_i',a_i']\cap C=\emptyset$.
Then we pick any $b_{i-1}\in(t_i,a_i')$ and $a_i\in(b_i',t_i)$.

We set $a_0=t_0=-\eps$.
If $N<\infty$, we let $b_N=-T$.

With this construction we have found disjoint line segments $L_i=\{0\}\times[b_i,a_i]$, $0\leq i<N+1$, and $L_i'=\{r_i\}\times[b_i',a_i']$, $0<i<N+1$.
Each of these segments has positive distance to the conjugate set $C\subset U$, and the union of projections to the $t$-axis is~$\bar I_1$.
Any $t\in I_1$ is in the projection of one or two such segments.
The shape operator is well defined in a neighborhood of each of these line segments.

Now we can start setting the initial values for our Jacobi fields.
First, we simply let $\m j|_{L_0}=I$.
Since~$\m s$ is defined in a neighborhood of~$L_0$, the condition~\eqref{eq:DJ=SJ-m} determines~$\partial_r\m j|_{L_0}$.
With the initial value and derivative given, the Jacobi equation~\eqref{eq:Jacobi-m} determines~$\m j$ on $I_0\times[b_0,a_0]$, as~$\m r|_{I_0}$ is determined from the data as explained above.

In particular, the Jacobi field matrix~$\m j$ is determined on part of~$L_1'$, namely $\{r_1\}\times[b_0,a_1']$.
This part is uniquely determined, invertible and smooth because the two points~$\gamma(0)$ and~$\gamma(r_1)$ are not conjugate.
Now we extend~$\m j$ smoothly to the whole line segment~$L_1'$, keeping it invertible at every point.
The derivative~$\partial_r\m j|_{L_1'}$ is then determined by~\eqref{eq:DJ=SJ-m}.
Using the Jacobi equation~\eqref{eq:Jacobi-m} again, this information on~$L_1'$ determines~$\m j$ on an initial part of~$L_1$, and we continue it smoothly and invertibly to the whole~$L_1$.
Continuing iteratively and using the knowledge of the shape operator in $U\setminus C$, we find the values of~$\m j(0,t)$ and~$\partial_r\m j(0,t)$ for all $t\in I_1$.

We have fixed our consistent initial data corresponding to a well behaved Jacobi field matrix~$\m j(r,t)$, so we no longer need to avoid conjugate points.

{\bfseries Step 3: Uniqueness of the solution.}
We then set out to prove uniqueness of solutions to the ODE system~\eqref{eq:eom2} with our initial data for~$\m j_k(0,t)$ and~$\m y_k(0,t)$ for all $k\in\{0,1,2,3\}$ and $t\in I_1$.
We already know existence, so it suffices to show uniqueness.
One could show existence with similar arguments.

To abbreviate our notation, let us denote
\begin{equation}
\begin{split}
Z(r,t)
&=
(\m j_0(r,t),\m j_1(r,t),\m j_2(r,t),\m j_3(r,t),
\\&\quad
\m y_0(r,t),\m y_1(r,t),\m y_2(r,t),\m y_3(r,t))
\in
(\R^{(n-1)\times(n-1)})^8
.
\end{split}
\end{equation}
We use operator norms for the individual matrices, and the norm of $Z(r,t)$ is the sum of norms of the eight matrices.

Suppose~$Z'(r,t)$ is another solution with the same initial conditions, with all the matrices decorated with a prime.
We know that $Z(0,t)=Z'(0,t)$ for all $t\in I_1$.
Let
\begin{equation}
\rho_0
=
\inf\{r\leq0;Z(s,t)=Z'(s,t)\text{ for all }t\in I_1\text{ and }s\in(r,0)\}.
\end{equation}
To show that $Z=Z'$, assume for a contradiction that $\rho_0>-T$.
By continuity, $Z(\rho_0,t)=Z'(\rho_0,t)$ for all $t\in I_1$.

Using the fundamental theorem of calculus, we find that for any $\rho\in(-T,\rho_0)$ and $t\in I_0$, we have
\begin{equation}
\label{eq:ftoc-z}
Z(\rho,t)-Z'(\rho,t)
=
\int_{\rho_0}^{\rho}(\partial_rZ(r,t)-\partial_rZ'(r,t))\dd r.
\end{equation}
Using equation~\eqref{eq:eom2}, we find formulas for $\partial_rZ(r,t)-\partial_rZ'(r,t)$:
\begin{equation}
\label{eq:r-ders}
\begin{cases}
\partial_r\m j_k(r,t)-\partial_r\m j_k'(r,t)
=
\m y_k(r,t)-\m y_k'(r,t)\\
\partial_r\m y_k(r,t)-\partial_r\m y_k'(r,t)
=
\frac12Q(Z(r,r))\m j_k(r,t)
\\
\qquad\qquad\qquad\qquad\qquad
-\frac12Q(Z'(r,r))\m j_k'(r,t),
\end{cases}
\end{equation}
where we have abbreviated
\begin{equation}
Q(Z^{(\prime)}(r,r))
=
Q(\m j^{(\prime)}(r,r),\m y^{(\prime)}(r,r)).
\end{equation}
We will estimate these derivatives in terms of a suitable norm of $Z-Z'$.

The function~$Q$ defined in~\eqref{eq:Q-def} is smooth and well defined in a neighborhood of any point where~$B_0$ (which plays the role of~$\m y_0$) is invertible.
Therefore there are $\eta,L>0$ so that~$Q$ is $L$-Lipschitz in the closed ball $\bar B(Z(\rho_0,\rho_0),\eta)$.

For any $\delta>0$, let us denote $F_\delta=[\rho_0-\delta,\rho_0]^2$.
Due to continuity, for a sufficiently small $\delta_0>0$ we have
\begin{equation}
\max_{(r,t)\in F_{\delta_0}}\abs{Z(r,t)-Z(\rho_0,\rho_0)}
\leq
\eta
\end{equation}
and
\begin{equation}
\max_{(r,t)\in F_{\delta_0}}\abs{Z'(r,t)-Z(\rho_0,\rho_0)}
\leq
\eta.
\end{equation}
This ensures that the function~$Q$ is $L$-Lipschitz in its parameters when $(r,t)\in F_{\delta_0}$.
It follows that
\begin{equation}
\label{eq:aabs(Q(Z))}
\abs{Q(Z(r,t))}
\leq
\abs{Q(Z(\rho_0,\rho_0))}
+
L\eta
\eqqcolon
\alpha
\end{equation}
for all $(r,t)\in F_{\delta_0}$ and similarly for $\abs{Q(Z'(r,t))}$.

We will keep $\delta_0$ fixed and consider a new parameter $\delta\in(0,\delta_0]$ which we will fix later.
By~\eqref{eq:ftoc-z} and~\eqref{eq:r-ders}, we have
\begin{equation}
\m j_k(\rho,\tau)-\m j_k'(\rho,\tau)
=
\int_{\rho_0}^\rho\left(
\m y_k(r,\tau)-\m y_k'(r,\tau)
\right)\dd r,
\end{equation}
and so
\begin{equation}
\label{eq:j-j'-est}
\max_{F_{\delta}}\abs{\m j_k-\m j_k'}
\leq
\delta\max_{F_{\delta}}\abs{Z-Z'}.
\end{equation}
Similarly, we find
\begin{equation}
\begin{split}
&
\abs{\m y_k(\rho,\tau)-\m y_k'(\rho,\tau)}
\\&=
\frac12\abs{\int_{\rho_0}^\rho\left(
Q(Z(r,r))\m j_k(r,\tau)
-
Q(Z'(r,r))\m j_k'(r,\tau)
\right)\dd r}
\\&\leq
\frac12
\int_{\rho_0}^\rho
\bigg(\abs{
Q(Z(r,r))(\m j_k(r,\tau)-\m j_k'(r,\tau))
}
\\&\quad
+\abs{
(Q(Z(r,r))-Q(Z'(r,r)))\m j_k'(r,\tau)
}\bigg)\dd r
\\&\leq
\frac12
\int_{\rho_0}^\rho
\bigg(
\alpha\abs{\m j_k(r,\tau)-\m j_k'(r,\tau)}
\\&\quad
+L\abs{Z(r,r)-Z'(r,r)}\abs{\m j_k'(r,\tau)}
\bigg)\dd r.
\end{split}
\end{equation}
The ODE~\eqref{eq:eom2} with naive estimates and Gr\"onwall's inequality can be used to establish an a priori estimate $\max_{F_{\delta_0}}\abs{\m j_k'}\leq\beta<\infty$. 
We may then estimate
\begin{equation}
\label{eq:y-y'-est}
\max_{F_{\delta}}\abs{\m y_k-\m y_k'}
\leq
\frac\delta2
(\alpha+L\beta)
\max_{F_{\delta}}\abs{Z-Z'}.
\end{equation}
Combining~\eqref{eq:j-j'-est} and~\eqref{eq:y-y'-est} for all four values of~$k$ gives
\begin{equation}
\label{eq:Z-Z'-est}
\max_{F_{\delta}}\abs{Z-Z'}
\leq
\delta
(4+2(\alpha+L\beta))
\max_{F_{\delta}}\abs{Z-Z'}.
\end{equation}
If we choose
\begin{equation}
\delta
=
\min\left\{\delta_0,\frac1{8+4(\alpha+L\beta)}\right\},
\end{equation}
the estimate~\eqref{eq:Z-Z'-est} gives $\max_{F_{\delta}}\abs{Z-Z'}=0$.
Thus $Z=Z'$ in~$F_\delta$.

This implies that $Q(Z(r,r))=Q(Z'(r,r))$ for all $r\in[\rho_0-\delta,\rho_0]$.
Since $Z(\rho_0,t)=Z'(\rho_0,t)$ for all $t\in I_1$, 
unique solvability of the ODE system~\eqref{eq:eom1} with fixed~$R|_{[\rho_0-\delta,\rho_0]}$ shows that the solutions~$Z(r,t)$ and~$Z'(r,t)$ must coincide for all $(r,t)\in[\rho_0-\delta,\rho_0]\times I_1$.
This contradicts the definition of~$\rho_0$ and shows that indeed $\rho_0=-T$, whether it is a real number or~$-\infty$.

{\bfseries Step 4: Finding~$S$, $K$, and~$R$.}
We have now proven that the Jacobi fields~$\m j$ are uniquely determined in $I_1\times I_1$.
The matrix $\m s(r,t)$ exists when the two points~$\gamma(r)$ and~$\gamma(t)$ are not conjugate, and this happens precisely when~$\m j(r,t)$ is invertible.
The matrix~$\m s(r,t)$ may be computed from~\eqref{eq:DJ=SJ-m}:
\begin{equation}
\label{eq:S=Y/J}
\m s(r,t)
=
(\partial_r\m j(r,t))
\m j(r,t)^{-1}.
\end{equation}
Therefore the matrix~$\m s$ is uniquely determined on $I_1\times I_1$ when it exists from the data.
The same holds immediately for the inverse~$\m k$ as well.

The matrix~$\m k$ is defined near the diagonal $r=t$ and from it the curvature matrix~$\m r(t)$ can be computed by~\eqref{eq:R=Q-m}.
The proof is complete.
\end{proof}

\begin{remark}
The function~$\m j$ corresponding to a geodesic passing through~$U$ is determined by the data uniquely up to a $t$-dependent change of basis.
One can define the matrix~$\m s(r,t)$ by~\eqref{eq:S=Y/J} and it exists precisely when~$\m j(r,t)$ is invertible.
It is sometimes possible to define the matrix~$\m k(r,t)$ in a limit sense (extension by continuity) even when~$\m s(r,t)$ is not invertible.
This is possible if and only if~$\partial_r\m j(r,t)$ is invertible and is given by $\m k=\m j(\partial_r\m j)^{-1}$.
For example, this happens at the diagonal $r=t$.
\end{remark}

\begin{remark}
\label{rmk:JSK-numerical}
The proof suggests an algorithm for solving the Jacobi fields.
One first needs to set up initial values of the Jacobi fields.
This was done in the proof by setting initial values away from conjugate points and propagating the initial data to $r=0$ by the Jacobi equation.
This is possible because~$F|_U$ determines the curvature operator~$R$ on~$I_0$.
Having initial data on the open interval~$I_0$ was only needed to avoid conjugate points in setting up the initial conditions for Jacobi fields.

Then one solves the ODE~\eqref{eq:eom2} iteratively.
If it has been solved for $r\in[\rho_0,0]$ and $t\in I_0$, it is then solved in a small square~$[\rho_0-\delta,\rho_0]^2$ with a corner at $(\rho_0,\rho_0)$ by means of fixed point iteration of the integral formulation
\begin{equation}
\label{eq:eom3}
\begin{cases}
\m j_k(\rho,t)
=
\m j_k(\rho_0,t)+
\int_{\rho_0}^\rho
\m y_k(r,t)
\dd r
\\
\m y_k(\rho,t)
=
\m y_k(\rho_0,t)+
\int_{\rho_0}^\rho
\frac12 Q(\m j(r,r),\m y(r,r))\m j_k(r,t)
\dd r
,
\end{cases}
\end{equation}
where~$k$ ranges from~$0$ to~$3$.
For small enough step size this will converge; the proof essentially shows that the corresponding integral operator is a contraction for small enough step size $\delta>0$.
As a consistency check, one needs to enforce~$\m j(r,r)=0$.
Once a solution has been found, the curvature operator $\frac12 Q(\m j(r,r),\m y(r,r))$ is uniquely determined for $r\in[\rho_0-\delta,\rho_0]$.
This can be used to propagate the Jacobi fields for other values of $t\in I_0$ through the area $r\in[\rho_0-\delta,\rho_0]$.
This extends the solution from $(r,t)\in[\rho_0,0]\times I_0$ to $(r,t)\in[\rho_0-\delta,0]\times I_0$.
\end{remark}


\subsection{Proof of theorem~\ref{thm:RU}}
\label{sec:pf-thm-RU}

The bulk of the proof of our main result was done in lemma~\ref{lma:JSKR-unique}, and it remains to put the pieces together.

\begin{proof}[Proof of theorem~\ref{thm:RU}]
Let $\eps>0$ be such that $[-\eps,\eps]\subset I$ and $\gamma_1({[-\eps,\eps]})\subset U_1$.
Let us write $I_0=(-\eps,\eps)$ and $I_1=I\cap(-\infty,\eps)$.

Up to identifying with the diffeomorphism~$\psi$, the sphere data on the two manifolds agree.
With this identification, $\Sigma_1(r,t)=\Sigma_2(r,t)$ whenever $r\in I_0$ and $t\in I$ with $t<r$.
As the two surfaces $\Sigma_i(r,t)$ coincide, so do their shape operators $S_i(r,t)$.
This holds whenever $t<r$.

When $t>r$ and both times are in~$I_0$, the spheres have ``negative radius''.
This simply means that in the definition of spheres (see definition~\ref{def:vss-M}) the geodesics are followed backwards, corresponding to the exponential map of the reversed Finsler function~$\rev{F}$.
The relevant parts of these spheres are contained in~$U_i$ by the choice of $\eps>0$, and the spheres agree simply because~$\psi$ is isometric.

Therefore we have $S_1(r,t)=S_2(r,t)$ whenever $r\in I_0$, $t\in I_1$, and the shape operators are defined.
This allows us to use lemma~\ref{lma:JSKR-unique}.
It immediately shows that $R_1(t)=R_2(t)$.

The operator $\hat U(t,s)$ is fully determined by~$\hat R(t)$ through the Jacobi equation, so the equality of the solution operators follows from the equality of the curvature operators.
\end{proof}

\begin{remark}
The last step of the uniqueness proof was backwards in light of the proof of lemma~\ref{lma:JSKR-unique}.
The Jacobi fields $J(r,t)$ were the main object whose uniqueness was established and everything else was derived from them.
Indeed, one could extract $U(t,s)$ from $J(t,s)$, but this is a detour in comparison to arguing by the Jacobi equation as we did.
If~$J_1$ and~$J_2$ are the Jacobi fields on~$M_1$ and~$M_2$, respectively, then the lemma gives $J_1(r,t)=J_2(r,t)A(t)$ for some smooth pointwise invertible matrix-valued function $A\colon I_1\to\R^{(n-1)\times(n-1)}$.
\end{remark}

\section{Surface normal coordinates}
\label{sec:snc}
\label{sec:pf-thm-Riemann}

Let $\Sigma\subset M$ be a smooth oriented submanifold of codimension~$1$, and let~$\nu(x)$ be the positively oriented unit normal vector to~$\Sigma$ at $x\in\Sigma$.
We define the surface normal exponential map for~$\Sigma$ as
\begin{equation}
\label{eq:sne-def}
\begin{split}
\phi&\colon\Sigma\times\R\to M,\\
\phi&(x,t)=\gamma_{x,\nu(x)}(t),
\end{split}
\end{equation}
where~$\gamma_{x,v}$ is the unique geodesic with $\gamma_{x,v}(0)=x$ and $\dot\gamma_{x,v}(0)=v$.
This map is illustrated in figure~\ref{fig:sne}.
The surface normal exponential map of~\eqref{eq:sne-def} might not be defined for all $t\in\R$ if the manifold is not geodesically complete.

Note that typically $\gamma_{x,\nu(x)}(t)\neq\exp_x(t\nu(x))$ when $t<0$ if the Finsler geometry is not reversible.
This map~$\phi$ will be relevant for $t<0$, as we follow the arriving geodesics backwards from the spheres in the data towards the sources.

The point $\phi(x,t)\in M$ is said to be a focal point to~$\Sigma$ if the differential $\der\phi(x,t)$ is not bijective.
Therefore every non-focal point has a neighborhood where~$\phi$ is a diffeomorphism and thus gives coordinates on~$M$ in terms of coordinates on~$\Sigma$.
We call these the surface normal coordinates associated with~$\Sigma$.
These coordinates are particularly natural for our problem, as the manifold~$M$ is mostly unknown and the surfaces are given as the data.

Transforming coordinates on the sphere~$\Sigma$ in the data into coordinates on~$M$ was described in more detail in~\eqref{eq:snc-def}.

\begin{lemma}
\label{lma:d-sne}
Let $\Sigma\subset M$ be a smooth oriented submanifold of codimension~$1$ on a geodesically complete Finsler manifold~$M$.
Define the surface normal exponential map $\phi\colon\Sigma\times\R\to M$ as in~\eqref{eq:sne-def}.
This map is smooth.

Take any $x\in\Sigma$ and $t\in\R$, and any $v\in T_x\Sigma$ and $\tau\in\R=T_t\R$.
Then
\begin{equation}
[\der\phi(x,t)](v,\tau)
=
J(t),
\end{equation}
where~$J$ is the Jacobi field along the geodesic~$\gamma_{x,\nu(x)}$ with the initial conditions
\begin{equation}
\begin{split}
J(0)
&=
v
+\tau\nu(x)
\quad\text{and}\\
D_tJ(0)
&=
Sv,
\end{split}
\end{equation}
where~$S$ is the shape operator of~$\Sigma$.
\end{lemma}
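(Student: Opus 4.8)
The plan is to realize $\der\phi$ through geodesic variations, exactly as one does in the Riemannian case, and then to read off the initial conditions of the resulting Jacobi field. First I would record smoothness of $\phi$. The unit normal field $x\mapsto\nu(x)$ on $\Sigma$ is smooth: the positively oriented $g_{\nu}$-orthogonal unit vector to $T_x\Sigma$ can be produced by normalizing the positively oriented conormal with the dual Finsler norm and applying the (smooth) Legendre transform $T_x^*M\setminus0\to T_xM\setminus0$, and smoothness in $x$ follows from the implicit function theorem. Then $\phi(x,t)=\pi(\Phi_t(x,\nu(x)))$, where $\Phi_t$ is the geodesic flow on $TM\setminus0$ (smooth there, defined for all $t$ by geodesic completeness) and $\pi\colon TM\to M$ is the bundle projection; as a composition of smooth maps, $\phi$ is smooth. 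In particular $\nu(x)\ne0$, so no issue arises at the zero section.

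For the differential, by linearity it suffices to treat $(v,0)$ and $(0,\tau)$ separately. For $(0,\tau)$ we have $\der\phi(x,t)(0,\tau)=\tau\,\partial_t\phi(x,t)=\tau\,\dot\gamma_{x,\nu(x)}(t)$, and $t\mapsto\tau\dot\gamma(t)$ is a Jacobi field along $\gamma:=\gamma_{x,\nu(x)}$ because $D_t\dot\gamma=0$ and $R(t)\dot\gamma(t)=0$; its initial data are $J(0)=\tau\nu(x)$ and $D_tJ(0)=0$. For $(v,0)$, pick a curve $c\colon(-\eps,\eps)\to\Sigma$ with $c(0)=x$ and $\dot c(0)=v$, and form the variation $\Gamma(s,t)=\phi(c(s),t)=\gamma_{c(s),\nu(c(s))}(t)$. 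Each $t$-curve is a geodesic, so the variation field $J(t):=\partial_s\Gamma(0,t)$ is a Jacobi field along $\gamma$ (this is the standard way Jacobi fields arise in Finsler geometry; see Shen, BCS), and $\der\phi(x,t)(v,0)=J(t)$. Its initial position is $J(0)=\partial_s\Gamma(s,0)|_{s=0}=\dot c(0)=v$. For the initial covariant derivative I would invoke the symmetry of the Chern connection along the two-parameter map $\Gamma$, with reference vector $\partial_t\Gamma$: $D_tJ(0)=D_t\partial_s\Gamma|_{(0,0)}=D_s\partial_t\Gamma|_{(0,0)}$. Since $\partial_t\Gamma(s,0)=\dot\gamma_{c(s),\nu(c(s))}(0)=\nu(c(s))$ and the reference vector there equals $\nu(c(s))$ itself, the right-hand side is the covariant derivative of the unit normal field along $c$ taken with reference vector $\nu$, i.e.\ $\hat\nabla_v\nu=Sv$, where $\hat\nabla$ is the connection of the local Riemannian metric $\hat g=g_\nu$ and $S$ the shape operator of $\Sigma$. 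Summing the two contributions, $\der\phi(x,t)(v,\tau)$ is the Jacobi field with $J(0)=v+\tau\nu(x)$ and $D_tJ(0)=Sv$, as claimed.

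The main obstacle is the Finslerian bookkeeping in the step $D_t\partial_s\Gamma=D_s\partial_t\Gamma$: covariant differentiation depends on a reference direction, so one must use the form of the symmetry (torsion-freeness) lemma appropriate to a variation with reference vector $\partial_t\Gamma$, and then verify that upon restricting to $t=0$ this reference vector is exactly $\nu$, matching the reference used to define the shape operator via $\hat\nabla_w\nu$. Once this identification is made carefully, the computation is otherwise routine. For consistency with the splitting of Jacobi fields into normal and tangential parts used elsewhere, I would also remark that $v\in T_x\Sigma\subset N_0$ and $Sv\in N_0$ (since $\hat g(\nu,\nu)\equiv1$ forces $\hat g(\hat\nabla_v\nu,\nu)=0$), so the $(v,0)$-part of $\der\phi$ produces a normal Jacobi field while the $(0,\tau)$-part is tangential; their sum is the general $J$ appearing in the statement.
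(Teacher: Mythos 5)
Your proposal is correct and follows essentially the same route as the paper: decompose $\der\phi(x,t)$ into the $(0,\tau)$ part (the trivial tangential Jacobi field $\tau\dot\gamma$) and the $(v,0)$ part realized as the variation field of the geodesic family $\gamma_{c(s),\nu(c(s))}$, then read off $J(0)=v+\tau\nu(x)$ and $D_tJ(0)=Sv$. Your treatment of the reference-vector bookkeeping in the symmetry step $D_t\partial_s\Gamma=D_s\partial_t\Gamma$ is more explicit than the paper's (which defers to an analogous computation in an earlier lemma), but the argument is the same.
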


\begin{proof}
Smoothness follows from the smoothness of the submanifold and the geodesic flow.

As the direction of the relevant geodesic starting at~$x$ normal to~$\Sigma$ is~$\nu(x)$, the shape operator is a map $S\colon T_x\Sigma\to T_x\Sigma$.
Therefore the initial condition~$D_tJ(0)$ given above has no component parallel to the geodesic $\gamma\coloneqq\gamma_{x,\nu(x)}$.

The derivative in the variable~$t$ is trivial, as it corresponds to moving the endpoint along the geodesic~$\gamma$.
The claim holds for this directional derivative due to
\begin{equation}
[\der\phi(x,t)](0,\tau)
=
\tau\dot\gamma(t)
=
J(t),
\end{equation}
where~$J$ is the Jacobi field (which is simply parallel transported) with initial conditions $J(0)=\tau\dot\gamma(0)$ and $D_tJ(0)=0$.
We thus set $\tau=0$ and focus on differentiation in the variable~$x$.

Let $\sigma\colon(-\eps,\eps)\to\Sigma$ be a smooth curve so that $\sigma(0)=x$ and $\sigma'(0)=v$.
Now $\gamma_s=\gamma_{\sigma(s),\nu(\sigma(s))}$ is a family of geodesics parametrized smoothly by $s\in(-\eps,\eps)$.
The variation of this family at $\gamma=\gamma_0$ is the Jacobi field $J(t)=\partial_s\gamma_s(t)|_{s=0}$.

We then turn to finding the initial conditions of this Jacobi field.
We have $J(0)=\sigma'(0)=v$.
As all the geodesics in the family start normal to~$\Sigma$, the covariant derivative is given by $D_tJ(0)=SJ(0)$ --- a more complicated case was covered in lemma~\ref{lma:1order-Jacobi}.
Therefore~$J$ has the initial conditions as claimed (with $\tau=0$), and so
\begin{equation}
[\der\phi(x,t)](v,0)
=
\partial_s\phi(\sigma(s),t)|_{s=0}
=
J(t)
\end{equation}
as claimed.
\end{proof}

We are now ready to prove our second theorem.

\begin{proof}[Proof of theorem~\ref{thm:Riemann}]
Consider a visible smooth sphere $\Sigma\subset U$, a point $x\in\Sigma$ and let~$S$ be the shape operator of~$\Sigma$ at~$x$.
By lemma~\ref{lma:d-sne} the surface normal exponential map~$\phi$ has a bijective differential at $(x,t)$ if and only if the map $\beta\colon v\to J(t)$ is bijective, where $v\in T_x\Sigma$ and~$J$ is the Jacobi field along~$\gamma_{x,\nu(x)}$ with the initial conditions $J(0)=v$ and $D_tJ(0)=Sv$.
(Both conditions are equivalent with~$t$ not being a focal distance to~$\Sigma$ at~$x$ by definition.)
The local coordinate maps $\alpha_i\colon\Omega\to\Sigma_i$ do not change these properties.

As the shape operator and the solution operator to the Jacobi equation (with respect to a parallel frame) are determined by the sphere data due to theorem~\ref{thm:RU}, the two maps~$\phi_{\alpha_i}$ for $i=1,2$ have bijective differentials at the same points.
This was the first claim.

We will now drop the index~$i$ and show that the sphere data determines the Riemannian metric tensor $g=g_G$ along the vector field~$G$ in the surface normal coordinates given by~$\phi_\alpha$.
To that end, pick any indices $j,k\in\{1,\dots,n\}$.

The last coordinate vector field is $\partial_{x^n}=G$, and so $g_{nn}=\abs{G}=1$.
Let us then find~$g_{nk}$ for $k<n$.
The coordinate vector field $\partial_{x^k}$ coincides with the value of a Jacobi field~$J_k$, for which~$J_k(0)$ is the $k$th basis vector in the local coordinates~$\alpha$ on~$\Sigma$ and $D_tJ_k(0)=SJ_k(0)$.
Here~$S$ is again the shape operator on~$\Sigma$, determined by the data.
This Jacobi field~$J_k(t)$ is normal to the geodesic at all times and so
\begin{equation}
g_{nk}
=
g(\partial_{x^n},\partial_{x^k})
=
g(G,J_k)
=
0.
\end{equation}
Here we leave the points of evaluation implicit to reduce clutter.

If~$j$ and~$k$ are both in $\{1,\dots,n-1\}$, then the coordinate vector fields both correspond to Jacobi fields and
\begin{equation}
g_{jk}
=
g(J_j,J_k).
\end{equation}
As the data determines the two Jacobi fields~$J_j$ and~$J_k$ in an orthonormal frame (theorem~\ref{thm:RU}), we know the map $t\mapsto g_{\dot\gamma(t)}(J_j(t),J_k(t))$.
Evaluation of this map gives the components of the metric tensor in directions normal to the vector field~$G$.

The metric tensor in the direction parallel to~$G$ is determined more straightforwardly as we saw above.
This completes the proof.
\end{proof}






\subsection*{Acknowledgements}

MVdH gratefully acknowledges the support from the Simons Foundation under the MATH + X program, and the National Science Foundation under grant DMS-1559587.
JI was supported by the Academy of Finland (decisions 295853, 332890, and 336254). 
Much of the work was completed during JI's visits to Rice University, and he is grateful for hospitality and support from the Simons Foundation.
ML was supported by Academy of Finland (decisions
320113, 
303754, 
318990, and 
312119
).
The authors thank Teemu Saksala for numerous remarks and discussions, Negar Erfanian for help with drawing figures~\ref{fig:geodesic-and-normals} and~\ref{fig:sne}, and David Mis for help with drawing figure~\ref{fig:initial-conditions}.
The authors are grateful for the feedback from the anonymous referees.

\end{document}